\begin{document}

\newtheorem{theorem}{Theorem}
\newtheorem{lemma}[theorem]{Lemma}
\newtheorem{claim}[theorem]{Claim}
\newtheorem{cor}[theorem]{Corollary}
\newtheorem{prop}[theorem]{Proposition}
\newtheorem{definition}{Definition}
\newtheorem{question}[theorem]{Open Question}
\newtheorem{example}[theorem]{Example}
\newtheorem{remark}[theorem]{Remark}

\numberwithin{equation}{section}
\numberwithin{theorem}{section}

 \newcommand{\F}{\mathbb{F}}
\newcommand{\K}{\mathbb{K}}
\newcommand{\PP}{\mathbb{P}}
\newcommand{\D}[1]{D\(#1\)}
\def\scr{\scriptstyle}
\def\\{\cr}
\def\({\left(}
\def\){\right)}
\def\<{\langle}
\def\>{\rangle}
\def\fl#1{\left\lfloor#1\right\rfloor}
\def\rf#1{\left\lceil#1\right\rceil}
\def\le{\leqslant}
\def\ge{\geqslant}
\def\eps{\varepsilon}
\def\mand{\qquad\mbox{and}\qquad}

\def\vec#1{\mathbf{#1}}

\newcommand{\lcm}{\operatorname{lcm}}

\def\bl#1{\begin{color}{blue}#1\end{color}} % color text blue during edits

\newcommand{\C}{\mathbb{C}}
\newcommand{\Fq}{\mathbb{F}_q}
\newcommand{\Fp}{\mathbb{F}_p}
\newcommand{\Disc}[1]{\mathrm{Disc}\(#1\)}
\newcommand{\Res}[1]{\mathrm{Res}\(#1\)}
\newcommand{\ord}{\mathrm{ord}}

\newcommand{\Q}{\mathbb{Q}}
\newcommand{\Z}{\mathbb{Z}}
\renewcommand{\L}{\mathbb{L}}

\newcommand{\Norm}{\mathrm{Norm}}

%%%%%%%%%%%%%%%%%%%%%%%%%
% Alphabet calligraphie %
%%%%%%%%%%%%%%%%%%%%%%%%%
\def\cA{{\mathcal A}}
\def\cB{{\mathcal B}}
\def\cC{{\mathcal C}}
\def\cD{{\mathcal D}}
\def\cE{{\mathcal E}}
\def\cF{{\mathcal F}}
\def\cG{{\mathcal G}}
\def\cH{{\mathcal H}}
\def\cI{{\mathcal I}}
\def\cJ{{\mathcal J}}
\def\cK{{\mathcal K}}
\def\cL{{\mathcal L}}
\def\cM{{\mathcal M}}
\def\cN{{\mathcal N}}
\def\cO{{\mathcal O}}
\def\cP{{\mathcal P}}
\def\cQ{{\mathcal Q}}
\def\cR{{\mathcal R}}
\def\cS{{\mathcal S}}
\def\cT{{\mathcal T}}
\def\cU{{\mathcal U}}
\def\cV{{\mathcal V}}
\def\cW{{\mathcal W}}
\def\cX{{\mathcal X}}
\def\cY{{\mathcal Y}}
\def\cZ{{\mathcal Z}}

\def\fra{{\mathfrak a}} 
\def\frb{{\mathfrak b}}
\def\frc{{\mathfrak c}}
\def\frd{{\mathfrak d}}
\def\fre{{\mathfrak e}}
\def\frf{{\mathfrak f}}
\def\frg{{\mathfrak g}}
\def\frh{{\mathfrak h}}
\def\fri{{\mathfrak i}}
\def\frj{{\mathfrak j}}
\def\frk{{\mathfrak k}}
\def\frl{{\mathfrak l}}
\def\frm{{\mathfrak m}}
\def\frn{{\mathfrak n}}
\def\fro{{\mathfrak o}}
\def\frp{{\mathfrak p}}
\def\frq{{\mathfrak q}}
\def\frr{{\mathfrak r}}
\def\frs{{\mathfrak s}}
\def\frt{{\mathfrak t}}
\def\fru{{\mathfrak u}}
\def\frv{{\mathfrak v}}
\def\frw{{\mathfrak w}}
\def\frx{{\mathfrak x}}
\def\fry{{\mathfrak y}}
\def\frz{{\mathfrak z}}

\def\ov\Q{\overline{\Q}}
\def \brho{\boldsymbol{\rho}}

\def \fP {\mathfrak P}

\def \Prob{{\mathrm {}}}
\def\e{\mathbf{e}}
\def\ep{{\mathbf{\,e}}_p}
\def\epp{{\mathbf{\,e}}_{p^2}}
\def\em{{\mathbf{\,e}}_m}

\newcommand{\sR}{\ensuremath{\mathscr{R}}}
\newcommand{\sDI}{\ensuremath{\mathscr{DI}}}
\newcommand{\DI}{\ensuremath{\mathrm{DI}}}

\newcommand{\Orb}[1]{\mathrm{Orb}\(#1\)}
\newcommand{\aOrb}[1]{\overline{\mathrm{Orb}}\(#1\)}
\def \PrePer{{\mathrm{PrePer}}}
\def \Per{{\mathrm{Per}}}

\newcommand{\Tr}{\operatorname{Tr}}
\newcommand{\Nm}{\operatorname{Nm}}

\newenvironment{notation}[0]{%
  \begin{list}%
    {}%
    {\setlength{\itemindent}{0pt}
     \setlength{\labelwidth}{1\parindent}
     \setlength{\labelsep}{\parindent}
     \setlength{\leftmargin}{2\parindent}
     \setlength{\itemsep}{0pt}
     }%
   }%
  {\end{list}}

\definecolor{dgreen}{rgb}{0.,0.6,0.}
\def\tgreen#1{\begin{color}{dgreen}{\it{#1}}\end{color}}
\def\tblue#1{\begin{color}{blue}{\it{#1}}\end{color}}
\def\tred#1{\begin{color}{red}#1\end{color}}
\def\tmagenta#1{\begin{color}{magenta}{\it{#1}}\end{color}}
\def\tNavyBlue#1{\begin{color}{NavyBlue}{\it{#1}}\end{color}}
\def\tMaroon#1{\begin{color}{Maroon}{\it{#1}}\end{color}}

\title[Waring problem with 
Dickson polynomials]{On the Waring problem with 
Dickson polynomials modulo a prime}

 \author[I.~E.~Shparlinski]{Igor E. Shparlinski}
 \address{School of Mathematics and Statistics, University of New South Wales.
 Sydney, NSW 2052, Australia}
 \email{igor.shparlinski@unsw.edu.au}
 
  \author[J. F. Voloch] {Jos\'e Felipe Voloch}
\address{
School of Mathematics and Statistics,
University of Canterbury,
Private Bag 4800, Christchurch 8140, New Zealand}
\email{felipe.voloch@canterbury.ac.nz}

\begin{abstract}
We improve recent results of D.~Gomez and A.~Winterhof (2010)
and of A.~Ostafe and  I.~E.~Shparlinski (2011)
on the Waring  problem with Dickson polynomials in the case 
of prime finite fields. 
Our approach is based on recent bounds of  Kloosterman and Gauss sums
due to A.~Ostafe,  I.~E.~Shparlinski and J.~F.~Voloch (2021). 
\end{abstract}

\keywords{Dickson polynomials, Waring problem, Kloosterman sums, Gauss sums} 

\subjclass[2020]{11P05, 11T06, 11T23}

\maketitle

\tableofcontents 
%\paragraph{Subject Classification (2000)} 
\section{Introduction}

\subsection{Previous results}
Let $\F_q$ be the finite field of $q$ elements. For  $a \in \F_q$ 
we define the sequence of  Dickson polynomials $D_e(X,a)$, 
$e =0,1, \ldots$, 
recursively by the relation
$$
D_e(X,a)  = XD_{e-1}(X,a) -a D_{e-2}(X,a), \qquad e=2, 3, \ldots, 
$$
where $D_0(X,a) = 2$ and $D_1(X,a) = X$, see~\cite{LMT} for 
background on Dickson polynomials. 

Gomez and Winterhof~\cite{GoWi} have considered an analogue of 
the Waring problem for Dickson polynomials over $\F_q$, that is, 
the question of the existence and estimation 
of a positive integer $s$ such that the equation 
\begin{equation}
\label{eq:War-Dick} 
D_e(u_1,a)+\ldots+ D_e(u_s,a) = c, \qquad u_1, \ldots, u_s \in \F_q,
\end{equation}
is solvable  for any $c \in \F_q$, see also~\cite{CMW}. 

In particular, we denote by $g_a(e,q)$ the smallest possible 
value of $s$ in~\eqref{eq:War-Dick} and put   $g_a(e,q) = \infty$
if such $s$ does not exist. 

Since for $a=0$ we have $D_e(X,a) = X^e$, this case corresponds to
the classical Waring problem in finite fields where recently 
quite substantial progress has been achieved, see~\cite{Cip,CCP,CoPi,vsWoWi};
a survey of earlier results can also be found in~\cite{Wint}.
So, we can restrict ourselves to the case of $a \in \F_q^*$. 

%Furthermore, since for $e \equiv f \pmod {q^2-1}$ we have
%$D_e(u,a)=D_f(u,a)$ for $u \in \F_q$, we always assume that $1 \le e < q^2 - 2$. 

Using the identity 
\begin{equation}
\label{eq:Dick Ident} 
D_e(v+av^{-1},a) = v^e+a^ev^{-e},
\end{equation} 
which holds for any nonzero $v$ in the algebraic closure of $\F_q$,
see~\cite[Equation~(1.1)]{GoWi}, and   Weil-type bounds of 
additive character sums with rational functions, 
Gomez and Winterhof~\cite[Theorem~4.1]{GoWi} have proved that for $s \ge 3$ the 
inequality $g_a(e,q)  \le s$ holds 
\begin{itemize}
\item for any $a\in \F_q^*$ and $\gcd(e,q-1) \le  2^{-3} q^{1/2 -1/(2s-2)}$;
\item for $a=1$  and $\gcd(e,q+1) \le 2^{-1} q^{1/2 -1/(2s-2)}$.
\end{itemize}

Note that if $\min\{\gcd(e,q-1) , \gcd(e,q+1)\} \le   2^{-3} q^{1/4}$
the above result gives a very strong bound  $g_a(e,q)  \le 3$. 
Hence, throughout this paper we can assume that 
\begin{equation}
\label{eq:large e} 
\min\{\gcd(e,q-1) , \gcd(e,q+1)\}>  2^{-3} q^{1/4}.
\end{equation}

A different approach from~\cite{OS}, based on additive combinatorics, in particular on   results of  
Glibichuk~\cite{Glib}  and  Glibichuk 
and Rudnev~\cite{GlRud}
has allowed to substantially extend the range of $e$. 
In particular, by~\cite[Theorem~2]{OS} we have $g_a(e,q)  \le 16$ if 
\begin{itemize}
\item for any $a\in \F_q^*$ and 
$$
\gcd(e,q-1) \le 2^{-3/2} (q-2)^{1/2};
$$

\item  for any $a\in \F_q^*$ which is a square and 
$$
\gcd(e,q+1) \le 2^{-3/2} (q-2)^{1/2}. 
$$
\end{itemize}

Furthermore,  for any $\varepsilon > 0$, by~\cite[Theorem~2]{OS} we have an upper bound on $g_1(e,q)$ in terms of  
only $\varepsilon$, provided that 
$$
\min\{\gcd(e,q-1) , \gcd(e,q+1)\} \le   q^{1-\varepsilon}.
$$

We also note that a multivariate version of the above question has been studied in~\cite{OTW}. 

\subsection{Main results}
Here we use some results and also ideas from~\cite{OSV} to improve the above bounds in the case of prime 
$q=p$ and in  intermediate  ranges of $\gcd(e,p-1)$ and  $\gcd(e,p+1)$. 

Since we are mostly interested in large values of $e$, to simplify some technical 
details, we assume that $e$ is not very small.

\begin{theorem}
\label{thm:medium e}  Let $p$ be prime. There is an absolute constant $C > 0$ such 
that for any fixed even integer $s  \ge 4$, uniformly over  $a\in \F_p^*$ the 
inequality $g_a(e,p)  \le s$ holds provided that 
$$
% \gcd(e,p-1) \le C  \max\left\{ p^{(7s-16)/(13s-4)}, p^{(4s -7)/(7s+8)}\right\}, 
 \gcd(e,p-1) \le C   p^{(4s -7)/(7s+8)} , 
$$
and, if $a$ is a quadratic residue modulo $p$,  also provided that 
$$
\gcd(e,p+1) \le   C  \max\left\{ p^{(11s-82)/(21s-42)},  p^{(6s-57)/(11s-22)}\right\}.
$$
\end{theorem}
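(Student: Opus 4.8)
The plan is to convert the Waring problem for Dickson polynomials into a counting problem for solutions of a system of equations in $\F_p^*$, and then to estimate the number of solutions using the exponential sum bounds from~\cite{OSV}. Write $d_- = \gcd(e,p-1)$ and $d_+ = \gcd(e,p+1)$. The starting point is the identity~\eqref{eq:Dick Ident}: for every $x$ in the ambient field we may write $x = v + av^{-1}$ for some $v$ (possibly in the quadratic extension $\F_{p^2}$), and then $D_e(x,a) = v^e + a^e v^{-e}$. The set of values of $D_e(\cdot,a)$ on $\F_p$ therefore splits according to whether $v \in \F_p^*$ (giving the "$q-1$" part, controlled by $d_-$) or $v$ lies on the norm-one torus of $\F_{p^2}$ (giving the "$q+1$" part, controlled by $d_+$). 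In either case the value $D_e(x,a)$ ranges over a coset-type set of size roughly $p/d_\pm$, and the power $v^e$ ranges over the subgroup of $e$-th powers. Thus $g_a(e,p) \le s$ will follow once we show that for every $c \in \F_p$ the number $N_s(c)$ of tuples $(v_1,\dots,v_s)$ in the appropriate torus with $\sum_i (v_i^e + a^e v_i^{-e}) = c$ is positive.

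Next I would expand $N_s(c)$ using additive characters: $N_s(c) = p^{-1}\sum_{\psi}\bar\psi(c)\bigl(\sum_{v} \psi(v^e + a^e v^{-e})\bigr)^s$, where $v$ runs over $\F_p^*$ (resp.\ the norm-one subgroup) and $\psi$ runs over additive characters of $\F_p$. The $\psi = \psi_0$ term gives the main term $\asymp p^{s-1}$ (more precisely $(\#\{v\})^s/p$, with $\#\{v\} \asymp p$). For the non-trivial characters, after the substitution $w = v^e$ the inner sum becomes a sum over the subgroup $H_\pm$ of index $d_\pm$ of a function of the form $w \mapsto \psi(\beta w + \gamma w^{-1})$; summing over the subgroup is, up to the standard orthogonality trick, a sum over all of $\F_p^*$ (resp.\ the torus) twisted by multiplicative characters of order dividing $d_\pm$. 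This is exactly a Kloosterman-type sum, and this is where the bounds of Ostafe--Shparlinski--Voloch~\cite{OSV} enter: they give a saving of the form $d_\pm^{?}\, p^{?}$ over the trivial bound, and the precise shape of that saving is what produces the two different exponents $(4s-7)/(7s+8)$ and $(11s-82)/(21s-42)$, $(6s-57)/(11s-22)$ (the $\max$ reflecting two regimes of the bound in~\cite{OSV}, e.g.\ depending on how $d_+$ compares to a power of $p$). One then bounds $|N_s(c) - \text{main term}| \le p^{-1}\bigl(\max_{\psi\ne\psi_0}|\text{inner sum}|\bigr)^{s-2}\sum_{\psi}|\text{inner sum}|^2$, using $\sum_\psi |\cdot|^2 \le p\cdot\#\{v\}$, and requires the resulting error to be smaller than the main term $\asymp p^{s-1}$.

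The key inequality to be verified is therefore
$$
\Bigl(\max_{\psi \ne \psi_0}\Bigl|\sum_{v} \psi(v^e + a^e v^{-e})\Bigr|\Bigr)^{s-2} \cdot p \cdot p \;<\; c_0\, p^{s-1}
$$
for a suitable small constant $c_0$, i.e.\ the $(s-2)$-th power of the character-sum bound must beat $p^{s-3}$; feeding in the $\cite{OSV}$ bound, expressed in terms of $d_\pm$ and $p$, and solving for $d_\pm$ gives precisely the stated thresholds. I would carry this out separately for the two cases: the split torus $\F_p^*$ with $d_- = \gcd(e,p-1)$, where the relevant sum is a genuine Kloosterman sum over $\F_p$ twisted by characters of order $d_-$; and the non-split torus, where after identifying the norm-one subgroup of $\F_{p^2}^*$ one gets the analogous sum over $\F_{p^2}$, and here one must also handle the constraint $a$ being a quadratic residue (needed so that $v + av^{-1}$ with $v$ on the torus actually lands in $\F_p$). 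The genuinely delicate point — the main obstacle — is bookkeeping the dependence on $d_\pm$ inside the~\cite{OSV} bounds: those bounds are only non-trivial when $d_\pm$ is not too large and are piecewise in nature, so one must track carefully which regime applies, optimize over the auxiliary parameters hidden in~\cite{OSV}, and confirm that the arithmetic of exponents closes up to yield exactly $(4s-7)/(7s+8)$ in the split case and the $\max$ of the two exponents in the non-split case, with all implied constants absorbed into the single absolute constant $C$. Everything else — the character expansion, the separation of the main term, the second-moment bound — is routine.
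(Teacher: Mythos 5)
Your skeleton (parametrize the values of $D_e(\cdot,a)$ via the split torus $\F_p^*$ or the norm-one torus $\cN_{p^2}$, expand the representation count in additive characters of $\F_p$, and beat the main term) matches the paper's starting point, but your error analysis provably cannot reach the stated exponent in the $\gcd(e,p-1)$ case. Write $d=\gcd(e,p-1)$, $\tau=(p-1)/d$, and $T(\alpha)=\sum_{v\in\F_p^*}\ep\(\alpha(v^e+a^ev^{-e})\)=d\,\cK_p(\cH;\alpha,\alpha a^e)$. First, your Parseval step is wrong as stated: since the map $v\mapsto v^e+a^ev^{-e}$ is roughly $2d$-to-$1$, one has $\sum_\alpha|T(\alpha)|^2=p\,\#\{(v,v'):v^e+a^ev^{-e}=v'^e+a^ev'^{-e}\}\asymp d\,p^2$, not $p\cdot\#\{v\}\asymp p^2$ (the inequality you wrote goes the wrong way for a non-injective parametrization). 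More importantly, even with the corrected second moment, bounding the error by $p^{-1}\bigl(\max_{\alpha\ne0}|T(\alpha)|\bigr)^{s-2}\sum_\alpha|T(\alpha)|^2$ and inserting $|\cK_p(\cH;\cdot)|\ll\tau^{20/27}p^{1/9}$ from Lemma~\ref{lem:Kloost} yields admissibility only for $d\ll p^{(4s-8)/(7s+13)}$, which is strictly below $p^{(4s-7)/(7s+8)}$ for every $s\ge4$ (at $s=4$: exponent $8/41$ versus $1/4$). The stronger exponent is obtained in the paper from a \emph{fourth}-moment (additive energy) estimate, $\sum_\alpha|\cK_p(\cH;\alpha,\alpha a)|^4\ll p\,\tau^{8/3}$ for $\tau\ll p^{3/4}$, which is Lemma~\ref{lem:Energy F_p}; this is not a consequence of the pointwise bounds of~\cite{OSV} plus orthogonality, but a new estimate resting on the irreducibility analysis of $F_e$ (Lemma~\ref{lem:Irred}) and the St\"ohr--Voloch-type point count (Lemma~\ref{lem:HighDeg}). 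The paper deploys it inside a ``half-covering'' device (show that $r=s/2$ summands miss fewer than $p/2$ elements, then double), but the essential missing idea in your sketch is the energy lemma itself. Declaring the moment step ``routine'' and locating all the difficulty in the bookkeeping of the~\cite{OSV} exponents therefore misses the actual content of the proof of the first assertion.

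For the $\gcd(e,p+1)$ assertion your plan is essentially viable: the paper has no energy lemma there either (see Question~\ref{quest:TraceEq}) and uses only the pointwise Gauss-sum bounds of Lemma~\ref{lem:Gauss} together with the second moment $\sum_\alpha|\cG_{p^2}(\cH;\alpha)|^2\ll p\tau$, which follows from the fact that $\Tr(u)=\Tr(v)$ forces $u=v^{\pm1}$ on $\cN_{p^2}$. Once you correct your collision count by the factor $\asymp\gcd(e,p+1)$, a direct computation with both bounds $\tau^{13/20}p^{1/6}$ and $\tau^{34/45}p^{1/9}$ closes the argument in (at least) the stated range, so that half of the theorem is recoverable from your outline; the $\gcd(e,p-1)$ half is not.
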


Theorem~\ref{thm:medium e}  is based on new bounds on Kloosterman and Gauss sums
over a thin subgroup, see Lemmas~\ref{lem:Kloost} and~\ref{lem:Gauss} and is 
most interesting when  $\min\{\gcd(e,p-1) , \gcd(e,p+1)\}$ is large, for example,  
of order $p^{1/2}$ or slightly larger.  

Next,  we show that using the classical Weil bound, see~\eqref{eq:K-Weil}   below, 
we can still improve previous estimates in  certain ranges of   $\gcd(e,p-1)$ 
(below $p^{1/2}$). 

\begin{theorem}
\label{thm:small e}  Let $p$ be prime. There is an absolute constant $C > 0$ such 
that for any fixed even integer $s  \ge 4$, the
inequality $g_a(e,p)  \le s$ holds  provided that 
$$
 \gcd(e,p-1) \le  C  p^{1/2 - 1/(3s-8)}. 
$$
\end{theorem}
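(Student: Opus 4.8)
The plan is to set up the usual character-sum criterion for solvability of~\eqref{eq:War-Dick} and then use the Dickson identity~\eqref{eq:Dick Ident} together with the classical Weil bound to control the relevant exponential sum. First I would introduce the additive character $\ep$ of $\F_p$ and write the number $N_s(c)$ of solutions of $D_e(u_1,a)+\ldots+D_e(u_s,a)=c$ as
$$
N_s(c) = \frac1p \sum_{\lambda \in \F_p} \ep\(-\lambda c\) T(\lambda)^s,
\qquad \text{where} \qquad T(\lambda) = \sum_{u \in \F_p} \ep\(\lambda D_e(u,a)\).
$$
Isolating $\lambda=0$ gives the main term $p^{s-1}$, so it suffices to show that $\sum_{\lambda\ne0}|T(\lambda)|^s < p^{s-1}$; since $s$ is even this is $\le \(\max_{\lambda\ne0}|T(\lambda)|\)^{s-2}\sum_{\lambda}|T(\lambda)|^2$, and the quadratic moment $\sum_\lambda |T(\lambda)|^2 = p\cdot\#\{(u,v): D_e(u,a)=D_e(v,a)\}$ is $O(d p)$ with $d=\gcd(e,p-1)$, because the fibres of $D_e(\cdot,a)$ have size $O(d)$ (the map $v\mapsto v+av^{-1}$ is $2$-to-$1$ onto its image and $D_e$ composed with it is $v\mapsto v^e+a^ev^{-e}$). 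Thus it remains to bound $\max_{\lambda \ne 0}|T(\lambda)|$.

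For the key bound on $T(\lambda)$ I would use~\eqref{eq:Dick Ident}: substituting $u = v+av^{-1}$, as $v$ runs over $\F_p^*$ (or a suitable quadratic extension when $a$ is a non-residue), $D_e(u,a) = v^e + a^e v^{-e}$, and $u$ ranges over the image set covering each value twice. Writing $m = \gcd(e,p-1)$ and letting $w = v^e$ run over the subgroup of index... — more precisely, after the substitution $T(\lambda)$ becomes, up to $O(1)$ and a factor of $2$, a sum of the shape $\sum_{w} \ep\(\lambda(w + a^e w^{-1})\)$ weighted by the number of $e$-th roots, which is $\gcd(e,p-1)$ on the subgroup of $e$-th powers. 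Collapsing this, one gets that $T(\lambda)$ is $\gcd(e,p-1)$ times a Kloosterman-type sum $\sum_{w \in H} \ep\(\lambda w + \mu w^{-1}\)$ over a subgroup $H$; but for the present theorem I instead keep it as a full rational-function character sum $\sum_{v\in\F_p^*}\ep\(\lambda v^e + \mu v^{-e}\)$ and apply the Weil bound~\eqref{eq:K-Weil} directly, which gives $|T(\lambda)| \ll e\, p^{1/2}$ — too weak — so the correct move is the subgroup reduction giving $|T(\lambda)| \ll d\cdot p^{1/2}$ with $d = \gcd(e,p-1)$, via Weil applied to the degree-$2$ (in $w$) rational function on $H$ lifted to $\F_p^*$, i.e. $|T(\lambda)| \le 2 \gcd(e,p-1)\, p^{1/2}$ roughly (this is exactly the bound used in~\cite{GoWi}).

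Combining: $\sum_{\lambda\ne0}|T(\lambda)|^s \ll (d p^{1/2})^{s-2} \cdot d p = d^{s-1} p^{s/2}$, and this is $< p^{s-1}$ provided $d^{s-1} < c\, p^{s/2 - 1}$, i.e. $d = \gcd(e,p-1) < C p^{(s/2-1)/(s-1)} = C p^{1/2 - 1/(2(s-1))}$; comparing with the claimed exponent $1/2 - 1/(3s-8)$, one sees the paper must be extracting an extra saving, presumably by not bounding all $|T(\lambda)|$ by the maximum but using that $|T(\lambda)|$ is genuinely of Kloosterman type so $\sum_\lambda |T(\lambda)|^2$ over the subgroup structure is smaller, or by using a sharper count of the quadratic moment exploiting the subgroup $H$ of $e$-th powers (so the second moment is $\ll (p/d)\cdot d^2/p \cdot p = \ldots$). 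I expect the main obstacle to be precisely this bookkeeping: carefully tracking the subgroup $H = \{v^e : v\in\F_p^*\}$ of order $(p-1)/d$, writing $T(\lambda)$ as $d$ times a sum over $H$, and then estimating $\sum_{\lambda}\bigl|\sum_{w\in H}\ep(\lambda w + \mu_\lambda w^{-1})\bigr|^s$ with the right dependence — balancing the Weil bound $\sqrt{p}$ per term against the number $|H|$ of nonzero terms and the cardinality of the $\lambda$-sum — to land on the exponent $1/2 - 1/(3s-8)$ rather than the cruder $1/2 - 1/(2s-2)$. The restriction $s\ge4$ even and the "not very small $e$" hypothesis will be used exactly to make the error term manipulations and the passage to the quadratic extension (or its avoidance here, since only $\gcd(e,p-1)$ appears) clean.
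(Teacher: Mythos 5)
You have the right general skeleton (orthogonality, pulling out a pointwise bound on all but a few factors, closing with a moment estimate), but, as you concede in your last paragraph, the argument you actually carry out only reaches the Gomez--Winterhof exponent $1/2-1/(2s-2)$, and the passage to $1/2-1/(3s-8)$ is left as a guess. That is a genuine gap, and the missing ingredient is concrete: the paper does not close with a \emph{second} moment but with the \emph{fourth} moment of the Kloosterman sum over the subgroup $\cH$ of order $\tau=(p-1)/\gcd(e,p-1)$. By orthogonality,
$$
\sum_{\alpha\in\F_p}\Big|\sum_{u\in\cH}\ep\big(\alpha(u+au^{-1})\big)\Big|^4=p\,R_\tau,
$$
where $R_\tau$ counts solutions of $u+u^{-1}+v+v^{-1}=x+x^{-1}+y+y^{-1}$ in $\cH^4$, and Lemma~\ref{lem:Energy F_p} gives $R_\tau\ll\tau^{8/3}+\tau^4/p$, a power saving over the trivial $O(\tau^3)$ obtained by fixing $u,v,x$. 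That energy bound is the real content of the improvement and is itself nontrivial: it rests on the factorisation statement of Lemma~\ref{lem:Irred} for the curve $F_e=0$ and on the point count of Lemma~\ref{lem:HighDeg}, none of which appear in your outline. Concretely, the paper bounds all but two of the Kloosterman factors by the Weil bound~\eqref{eq:K-Weil} and feeds the remaining ones, via Cauchy--Schwarz against the indicator of the exceptional set, into $pR_\tau$; replacing your $(dp^{1/2})^{s-2}\cdot$(second moment) by $(p^{1/2})^{s-4}\cdot p\tau^{8/3}$-type quantities is exactly what produces $1/(3s-8)$ in place of $1/(2s-2)$.

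Two further structural points. The paper never works with the full sum $T(\lambda)=\sum_{u\in\F_p}\ep(\lambda D_e(u,a))$ and never needs the fibre structure of $D_e$: it counts representations $f=\sum_{i=1}^{r}(u_i+au_i^{-1})$ with $u_i\in\cH$, which by~\eqref{eq:Dick Ident} are automatically sums of Dickson values, so only the one-sided inclusion of value sets is used. Moreover it runs the argument with $r=s/2$ summands, shows the exceptional set $\cF_r$ has fewer than $p/2$ elements, and finishes by the pigeonhole observation that $(f-\cR_r)\cap\cR_r\ne\emptyset$ for every $f$; this is where the hypothesis that $s$ is even enters, which your write-up does not explain. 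Finally, a small slip in your own computation: $\sum_\lambda|T(\lambda)|^2=p\cdot\#\{(u,v):D_e(u,a)=D_e(v,a)\}=O(dp^2)$, not $O(dp)$, so even the weaker exponent you derive needs the subgroup reduction (as in Gomez--Winterhof) to come out correctly.
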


We note that the bound of Theorem~\ref{thm:medium e} with respect to 
$ \gcd(e,p-1) $ is significantly stronger than with respect to 
$ \gcd(e,p+1) $. Furthermore, while our proof of Theorem~\ref{thm:small e},
can easily be adjusted to work with $\gcd(e,p+1)$, in this case it merely recovers the previous 
result of Gomez and Winterhof~\cite[Theorem~4.1]{GoWi}. This is because we do 
not have a good version of  Lemma~\ref{lem:Energy F_p} below, see Question~\ref{quest:TraceEq}. 

However, for a related equation our approach works and leads to Lemma~\ref{lem:Energy F_p2}
which in turn allows us to get new results for the classical Waring problem with monomials 
in the norm-one subgroup of $\F_{p^2}$, that is in 
\begin{equation}
\label{eq:Norm1} 
\cN_{p^2} = \{z \in \F_{p^2}:~ \Nm(z) = 1\}, 
\end{equation} 
where $\Nm(z)= z^{p+1}$  is the $\F_{p^2}/\F_p$ norm of $z$.

Let $G(k,p)$  denote  the smallest possible 
value of $s$ such that the equation 
$$
 u_1^k +\ldots+ u_s^k = c, \qquad u_1, \ldots, u_s \in \cN_{p^2} ,
$$
is solvable  for any $c \in  \F_{p^2}$. 

Since we are mostly interested in large values of $k$ when traditional 
methods do not work, in order to simplify the calculations, we assume that 
$\gcd(k,p+1)  \ge p^{1/6}$. 

\begin{theorem}
\label{thm:monom med k}  Let $p$ be prime. There is an absolute constant $C > 0$ such 
that for any fixed even integer $s  \ge 4$,   the 
inequality $G(k,p) \le s$ holds provided that 
$$
p^{1/6}\le  \gcd(k,p+1) \le  C \min\left\{p^{(6s-186)/(11s-116)} ,  p^{(5s-56)/(10s-56)}\right\}. 
$$
\end{theorem}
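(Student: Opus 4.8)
The plan is to follow the same circle-method/character-sum strategy that underlies Theorems~\ref{thm:medium e} and~\ref{thm:small e}, but now carried out in $\F_{p^2}$ for the norm-one torus $\cN_{p^2}$. First I would write the number of representations of $c\in\F_{p^2}$ as a sum of $s$ $k$-th powers of elements of $\cN_{p^2}$ via additive characters of $\F_{p^2}$: up to the main term $|\cN_{p^2}|^{s}/p^2 = (p+1)^s/p^2$, the error is controlled by the mixed moment
\[
\sum_{\psi\neq\psi_0}\ \Bigl|\sum_{u\in\cN_{p^2}} \psi(u^k)\Bigr|^{s}.
\]
Since $\cN_{p^2}$ is cyclic of order $p+1$, the inner sum is a character sum over the subgroup $\cH\le\cN_{p^2}$ of index $d:=\gcd(k,p+1)$, i.e. a sum over a multiplicative subgroup of $\F_{p^2}^{*}$ of size $(p+1)/d$. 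The key point is that such a sum, as $\psi$ ranges over the additive characters of $\F_{p^2}$, is exactly of the Kloosterman/Gauss type over a thin subgroup that is estimated in Lemmas~\ref{lem:Kloost} and~\ref{lem:Gauss} (this is the role of the inputs from~\cite{OSV}). So the strategy is: bound the sup of $|\sum_{u\in\cH}\psi(u)|$ by those lemmas, and bound the $(s{-}2)$-th power by a trivial bound while keeping two factors to feed into a fourth-moment / additive-energy estimate.

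The next step is the energy input, which is where Lemma~\ref{lem:Energy F_p2} enters — this is precisely the $\F_{p^2}$-analogue that the authors say they can prove (whereas the $\F_p$ version, Question~\ref{quest:TraceEq}, they cannot). Concretely, one splits
\[
\sum_{\psi\neq\psi_0}\Bigl|\sum_{u\in\cH}\psi(u)\Bigr|^{s}
\;\le\; \Bigl(\sup_{\psi\neq\psi_0}\Bigl|\sum_{u\in\cH}\psi(u)\Bigr|\Bigr)^{s-2}\cdot \sum_{\psi}\Bigl|\sum_{u\in\cH}\psi(u)\Bigr|^{2},
\]
or, for a sharper result, keeps the $s$-th moment and interpolates between a pointwise bound on $s-4$ factors and a genuine fourth moment, the latter being exactly the number of solutions of $u_1+u_2=u_3+u_4$ with $u_i\in\cH$, i.e. the additive energy $E(\cH)$, which Lemma~\ref{lem:Energy F_p2} bounds. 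The min of the two exponents in the theorem,
\[
\min\{p^{(6s-186)/(11s-116)},\ p^{(5s-56)/(10s-56)}\},
\]
should come out as: the first term from using the Gauss-sum bound of Lemma~\ref{lem:Gauss} together with the energy bound, and the second from balancing the Weil-type bound \eqref{eq:K-Weil} against the trivial bound $E(\cH)\le|\cH|^3 = ((p+1)/d)^3$ or against the Lemma~\ref{lem:Energy F_p2} bound. One then imposes that the total error is smaller than the main term $(p+1)^s/p^2$, which after collecting powers of $p$ and $d$ gives an inequality of the form $d\le C p^{\alpha(s)}$ with $\alpha(s)$ one of the two listed exponents; the hypothesis $d\ge p^{1/6}$ is used to absorb lower-order terms and to make the two regimes (where $d$ is ``large'' enough for the thin-subgroup bounds to beat Weil, versus not) meet cleanly, which is why it appears as a standing assumption rather than a loss.

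The main obstacle — and the step I would spend the most care on — is bookkeeping the dependence on both $s$ and $d$ through the two competing bounds and verifying that the stated exponents are exactly what the optimization yields; in particular one must check that for every even $s\ge 4$ and every $d$ in the stated range the chosen bound (thin-subgroup Gauss sum vs.\ classical Weil) is actually the stronger one, so that the $\min$ is genuinely attained and no range of $d$ is left uncovered. A secondary technical point is that the inner sum $\sum_{u\in\cN_{p^2}}\psi(u^k)$ must be rewritten correctly as a sum over the index-$d$ subgroup with the right multiplicity, and that $\psi$ restricted to $\F_{p^2}$ (rather than an auxiliary $\F_p$) gives genuinely the sums handled by Lemmas~\ref{lem:Kloost} and~\ref{lem:Gauss}; once that translation is set up, the rest is the same interpolation-of-moments computation as in Theorems~\ref{thm:medium e} and~\ref{thm:small e}, only with $q=p^2$ and with Lemma~\ref{lem:Energy F_p2} in place of Lemma~\ref{lem:Energy F_p}.
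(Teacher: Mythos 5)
Your plan matches the paper's proof in all essentials: pass to the subgroup $\cH\subseteq\cN_{p^2}$ of order $\tau=(p+1)/\gcd(k,p+1)$, pull out all but a few factors of the character sum using the second bound of Lemma~\ref{lem:Gauss} (respectively the Weil-type bound of Lemma~\ref{lem:G-Weil}), and close with the additive energy of $\cH$ from Lemma~\ref{lem:Energy F_p2}, the hypothesis $\gcd(k,p+1)\ge p^{1/6}$ serving exactly to make the $\tau^{14/5}$ term dominate in that energy bound; the two displayed exponents arise from these two choices of pointwise bound, just as you predict. The only cosmetic difference is that the paper counts representations by $r=s/2$ summands, shows the exceptional set has size below $p^2/2$, and then doubles, rather than running the character-sum argument directly with $s$ variables as you propose --- the two bookkeepings lead to the same final inequality.
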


%%Trivially, for an odd $s$, Theorem~\ref{thm:small e} implies that  $g_a(e,p)  \le s$ holds provided that 
%%\begin{equation}
%%\label{eq:odd s} 
%% \gcd(e,p-1)  \le C  p^{1/2 - 1/(3s-11)}.
%%\end{equation}
%%Now simple calculations show that a combination of  Theorem~\ref{thm:small e}
%%(for even $s$) and the bound~\eqref{eq:odd s}, improve the aforementioned bounds
%%of Gomez and Winterhof~\cite[Theorem~4.1]{GoWi}  for every $s =8$ and then 
%%for all $s\ge 10$.  

\section{Preliminaries}

\subsection{Notation} 
We use $\# \cA$ to denote the
cardinality of  a finite set $\cA$. 

For a prime $p$ and $u \in \C$,  we let  
$$
\ep(u) = \exp(2 \pi i u/p).
$$ 

Finally, we recall that  the notations $U = O(V)$, $U \ll V$ and $ V\gg U$  
are equivalent to $|U|\leqslant c V$ for some positive constant $c$
which, throughout this work, is absolute.  %\commI{Collected these things here} 

\subsection{Value sets of Dickson Polynomials} 
We note that through out the paper we use the following trivial observation
$$
\{v^e+a^ev^{-e} ~v \in \F_p\} \subseteq \{D_e(u,a):~u \in \F_p^*\} , 
$$
see~\eqref{eq:Dick Ident}.

Next, let  $\Tr(z) = z + z^p$  
be the trace of  $z\in \F_{p^2}$  in $\F_p$, 
respectively. 

We also recall the definition of the norm-one subgroup $\cN_{p^2}$ of $ \F_{p^2}^*$, 
given by~\eqref{eq:Norm1}. 
Then we note a slightly less obvious inclusion
$$
\{ b^e \Tr\(v^e\): ~ v \in \cN_{p^2}\} \subseteq \{D_e(u,a):~u \in \F_p\} 
$$
provided that $a = b^2$, $b\in \F_p^*$, is a quadratic residue modulo $p$.
Indeed,  we first note that for  $v \in \cN_{p^2}$
$$
bv+a(bv)^{-1}= b\(v+v^{-1}\)= b \Tr(v) \in \F_p.
$$
Thus 
$$
  \{D_e\(bv+a(bv)^{-1},a\):~v \in \cN_{p^2}\}  \subseteq \{D_e(u,a):~u \in \F_p\} .
  $$
On the other hand, from~\eqref{eq:Dick Ident} we have 
$$
D_e\(bv+a(bv)^{-1},a\) =  (bv)^e+a^e(bv)^{-e}
= b^e \Tr\(v^e\).
 $$

%%
%%\section{Value sets of Dickson polynomials}
%%
%%Although the following  discussion applies to arbitrary $q$ we present it for a prime $q=p$. 
%%
%%As in~\cite{GoWi,OS}, the identity~\eqref{eq:Dick Ident}
%%is one of our principal tools.  
%%Furthermore, also as in~\cite{OS} we observe that for $v  \in \F_{p^2}$ with $v^{p+1}=1$ we have $D_e(v+v^{-1},1)  \in \F_p^*$  
%%and consider the sets 
%%\begin{align*}
%%&\cD_a =\{D_e(v+av^{-1},a)~:~v \in \F_p^*\},\\
%%& \cE = \{D_e(v+v^{-1},1)~:~v^{p+1}=1, \ v  \in \F_{p^2}\}
%%\end{align*}
%%which are both subsets of $\F_p$.
%%
%%We recall~\cite[Lemma~4]{OS}
%%\begin{lemma}
%%\label{lem:ValSetDick} 
%%For any  $a\in \F_p^*$,
%%we have 
%%$$
%%\# \cD_a \ge \frac{p-1}{2\gcd(e,p-1)}\mand
%%\#\cE \ge \frac{p+1}{2\gcd(e,p+1)}. 
%%$$
%%\end{lemma}

\subsection{Bounds of some exponential sums}

Given a multiplicative subgroup $\cH\subseteq \F_p^*$  and $\alpha, \beta \in \F_p$ we consider Kloosterman sums over $\cH$
$$
\cK_p(\cH; \alpha,\beta) = \sum_{u \in \cH} \ep\(\alpha u +\beta u^{-1}\). 
$$
The classical Weil bound for exponential sums with rational functions, see, for example,~\cite[Theorem~2]{MorMor}, implies 
\begin{equation}
\label{eq:K-Weil} 
\cK_p(\cH; \alpha,\beta)  \ll p^{1/2}
\end{equation} 
provided $(\alpha,\beta) \ne (0,0)$. 

The following bound is given by~\cite[Corollary~2.9]{OSV}. 

\begin{lemma}
\label{lem:Kloost}  Let  $p$ be prime  and let $\cH$  be a 
multiplicative subgroup of  $\F_p^*$ of order $\tau$. Then uniformly 
over $(\alpha,\beta) \in \F_p^2$, $(\alpha,\beta) \ne (0,0)$, we have
 $$
\cK_p(\cH; \alpha,\beta)  \ll \min\{p^{1/2}, \tau^{23/36} p^{1/6},  \tau^{20/27}   p^{1/9} \}. 
$$
\end{lemma}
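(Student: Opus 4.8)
As Lemma~\ref{lem:Kloost} is quoted from~\cite[Corollary~2.9]{OSV}, strictly speaking there is nothing new to prove; I instead describe the shape of that argument and where the three regimes originate. The term $p^{1/2}$ is pure completion: writing $\mathbf{1}_{\cH}$ as the average of the $(p-1)/\tau$ multiplicative characters of $\F_p^*$ trivial on $\cH$ turns $\cK_p(\cH;\alpha,\beta)$ into the average of that many twisted Kloosterman sums $\sum_{x\in\F_p^*}\chi(x)\ep(\alpha x+\beta x^{-1})$, each of size $O(p^{1/2})$ by Weil, see~\eqref{eq:K-Weil}. This is efficient only for $\tau$ of order $p^{1/2}$, and the other two bounds recover the loss for thin $\cH$ by exploiting that $\cH$ is a group.

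For those, the plan is the usual amplification. Fix an even moment $2k$ and note the identity $|\cK_p(\cH;\alpha,\beta)|^{2k}=\sum_{A,B\in\F_p}N_k(A,B)\,\ep(\alpha A+\beta B)$, valid for all $(\alpha,\beta)$, where $N_k(A,B)$ counts $(\vec h,\vec g)\in\cH^{k}\times\cH^{k}$ with $\sum h_i-\sum g_i=A$ and $\sum h_i^{-1}-\sum g_i^{-1}=B$. Using the symmetry $\cK_p(\cH;\alpha,\beta)=\cK_p(\cH;\alpha\lambda,\beta\lambda^{-1})$ for $\lambda\in\cH$ (valid because $\cH\lambda=\cH$), averaging its $2k$-th power over $\lambda\in\cH$ and inserting the identity gives $\tau\,|\cK_p(\cH;\alpha,\beta)|^{2k}=\sum_{A,B}N_k(A,B)\,\cK_p(\cH;\alpha A,\beta B)$. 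Now apply Cauchy--Schwarz together with the exact second-moment identity $\sum_{A,B\in\F_p}|\cK_p(\cH;\alpha A,\beta B)|^{2}=p^{2}\tau$ (valid for any $(\alpha,\beta)\ne(0,0)$, by orthogonality) to obtain $\tau\,|\cK_p(\cH;\alpha,\beta)|^{2k}\ll\left(\sum_{A,B}N_k(A,B)^{2}\right)^{1/2}p\,\tau^{1/2}$. The leftover $\sum_{A,B}N_k(A,B)^{2}$ is a mixed energy of $\cH$; dropping either defining condition bounds it by the additive energy $T_{2k}(\cH)=\#\{a_1+\dots+a_{2k}=b_1+\dots+b_{2k}:a_i,b_i\in\cH\}$ (equivalently, by the same quantity for $\cH^{-1}=\cH$).

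Plugging into this the best available upper bounds for $T_{2k}(\cH)$ for thin multiplicative subgroups of $\F_p^*$ --- the sum--product and energy estimates of Heath-Brown--Konyagin, Bourgain--Garaev, Shkredov, Murphy--Rudnev--Shkredov--Wheeler and their relatives --- and taking the $2k$-th root produces, in a suitable range of $\tau$, a bound of the form $\tau^{c_1}p^{c_2}$. The two explicit exponent pairs in the statement come out of two admissible choices of $k$ (with a companion sum--product input, such as Rudnev's point--plane incidence bound, behind the $p^{1/9}$ term), followed by an optimisation; one then takes the minimum with the Weil term, and the degenerate cases $\alpha=0$ or $\beta=0$ are handled identically since $\ep(0)=1$.

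The main obstacle lies entirely on the arithmetic side: one needs the sharpest possible upper bounds for $T_{2k}(\cH)$ --- ideally for the genuine mixed energy above, which is smaller --- valid in as wide a range of $\tau$ as possible, together with a somewhat delicate optimisation over the moment $k$ and over which energy estimate applies in which range. The character-sum manipulations are routine, as is the bookkeeping needed to keep all auxiliary losses at the level $p^{o(1)}$.
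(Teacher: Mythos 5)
The paper contains no proof of Lemma~\ref{lem:Kloost}: it is imported verbatim from~\cite[Corollary~2.9]{OSV}, with the $p^{1/2}$ term supplied by the Weil bound~\eqref{eq:K-Weil}. So your decision to treat the lemma as a citation coincides exactly with what the authors do, and to that extent there is nothing to compare. Your account of the $p^{1/2}$ term (completion over the $(p-1)/\tau$ characters trivial on $\cH$, then Weil for each twisted Kloosterman sum) is correct, as are the two identities underlying your amplification step: the invariance of $\cK_p(\cH;\alpha,\beta)$ under $(\alpha,\beta)\mapsto(\alpha\lambda,\beta\lambda^{-1})$ for $\lambda\in\cH$, and the second-moment evaluation $\sum_{A,B\in\F_p}|\cK_p(\cH;\alpha A,\beta B)|^2=p^2\tau$ for $(\alpha,\beta)\ne(0,0)$.

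One step of your sketch, however, fails if taken literally: after Cauchy--Schwarz you propose to bound the mixed energy $\sum_{A,B}N_k(A,B)^2$ by the purely additive energy $T_{2k}(\cH)$, ``dropping either defining condition''. Since $T_{2k}(\cH)\ge\tau^{4k}/p$ for trivial reasons (Cauchy--Schwarz on the representation function), your inequality $\tau|\cK_p(\cH;\alpha,\beta)|^{2k}\ll T_{2k}(\cH)^{1/2}p\,\tau^{1/2}$ can never yield anything better than $|\cK_p(\cH;\alpha,\beta)|\ll\tau\,(p/\tau)^{1/(4k)}$, which is weaker than the trivial bound $\tau$ for every $k$ and every $\tau<p$; recall that the relevant regime here is $\tau$ as large as $p^{3/4}$, cf.~\eqref{eq: small tau}. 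You do note that one would ``ideally'' use the genuine mixed energy, but this is not an optional refinement --- it is indispensable, since only the simultaneous constraints on $\sum u_i$ and $\sum u_i^{-1}$ keep the count below the $\tau^{4k}/p$ threshold. In~\cite{OSV} the relevant mixed equation counts over $\cH$ are obtained by counting points on auxiliary curves via the St\"ohr--Voloch machinery, of exactly the kind reproduced in the present paper as Lemmas~\ref{lem:Irred}, \ref{lem:Energy F_p} and~\ref{lem:HighDeg-Fp2}, rather than from off-the-shelf additive-energy bounds for multiplicative subgroups. Since the lemma is quoted rather than proved, none of this affects the paper, but your sketch should not be read as a derivation of the stated exponents.
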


We also need a similar result for Gaussian sums 
$$
\cG_p(\cH; \alpha) = \sum_{u \in \cH} \ep\(\Tr\(\alpha u\)\), 
$$
over $\F_{p^2}$.

We  recall the definition of $\cN_{p^2}$ in~\eqref{eq:Norm1}. 
We  need  the following analogue of~\eqref{eq:K-Weil},  following easily from 
the bound 
$$
 \sum_{u \in \cN_{p^2} } \chi(u) \ep\(\Tr\(\alpha u\)\) \ll p^{1/2}
$$
with an arbitrary multiplicative character $\chi$ of $\F_{p^2}^*$, which in turn is a very special 
case of  a result of Li~\cite[Theorem~2]{Li}.

\begin{lemma}
\label{lem:G-Weil}
 Let  $p$ be prime and let $\cH$ be  a
multiplicative subgroup of  $\cN_{p^2}$ of order $\tau$. Then uniformly 
over $\alpha  \in \F_{p^2}^*$,  we have
$$
\cG_{p^2} (\cH; \alpha)  \ll  p^{1/2}.
$$
\end{lemma}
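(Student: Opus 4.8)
The plan is to deduce Lemma~\ref{lem:G-Weil} from the quoted bound of Li~\cite[Theorem~2]{Li} by the standard device of expanding the indicator function of $\cH$ inside the cyclic group $\cN_{p^2}$ into multiplicative characters. Since $\F_{p^2}^*$ is cyclic of order $p^2-1$, its subgroup $\cN_{p^2}$ is cyclic of order $p+1$, and $\cH$ is a subgroup of $\cN_{p^2}$, so $\tau=\#\cH$ divides $p+1$. Let $\Psi$ be the group of characters $\psi$ of $\cN_{p^2}$ that are trivial on $\cH$, so that $\#\Psi=(p+1)/\tau$; by orthogonality of the characters of the quotient $\cN_{p^2}/\cH$ one has, for every $u\in\cN_{p^2}$,
\begin{equation*}
\mathbf{1}[u\in\cH]=\frac{\tau}{p+1}\sum_{\psi\in\Psi}\psi(u).
\end{equation*}

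I would then substitute this identity into $\cG_{p^2}(\cH;\alpha)=\sum_{u\in\cN_{p^2}}\mathbf{1}[u\in\cH]\,\ep\(\Tr\(\alpha u\)\)$ and interchange the order of summation, obtaining
\begin{equation*}
\cG_{p^2}(\cH;\alpha)=\frac{\tau}{p+1}\sum_{\psi\in\Psi}\;\sum_{u\in\cN_{p^2}}\psi(u)\,\ep\(\Tr\(\alpha u\)\).
\end{equation*}
Since characters of a subgroup of a finite abelian group always extend, each $\psi\in\Psi$ is the restriction to $\cN_{p^2}$ of some multiplicative character $\chi$ of $\F_{p^2}^*$; hence the inner sum equals $\sum_{u\in\cN_{p^2}}\chi(u)\,\ep\(\Tr\(\alpha u\)\)$, which is $O(p^{1/2})$ by Li's bound since $\alpha\ne0$. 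Summing over the $(p+1)/\tau$ characters $\psi\in\Psi$ and multiplying by the prefactor $\tau/(p+1)$, the number of terms cancels and we conclude $\cG_{p^2}(\cH;\alpha)\ll p^{1/2}$, as required.

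There is no real obstacle here: the argument is just the routine reduction of a sum over a subgroup to a sum over all of $\cN_{p^2}$. The only points that deserve a word are that the restriction map from the character group of $\F_{p^2}^*$ onto that of $\cN_{p^2}$ is surjective (immediate for finite abelian groups, since every character of a subgroup extends), and that the version of Li's bound quoted above is uniform over all multiplicative characters $\chi$, including the trivial one; if one prefers to avoid a trivial $\chi$, a trivial $\psi\in\Psi$ may instead be lifted to $\chi=\eta\circ\Nm$ with $\eta$ a nontrivial character of $\F_p^*$, which is nontrivial on $\F_{p^2}^*$ yet trivial on $\cN_{p^2}$.
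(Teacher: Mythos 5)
Your proposal is correct and follows essentially the same route the paper indicates: the paper derives Lemma~\ref{lem:G-Weil} ``easily'' from Li's bound on $\sum_{u \in \cN_{p^2}} \chi(u) \ep\(\Tr\(\alpha u\)\)$ by precisely this character-expansion of the indicator of $\cH$ inside $\cN_{p^2}$, and your cancellation of the $(p+1)/\tau$ terms against the prefactor $\tau/(p+1)$ is the intended computation. Your remark on handling the trivial character (either by uniformity of Li's bound or by lifting to $\eta\circ\Nm$) is a sensible extra precaution that the paper leaves implicit.
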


Furthermore, by~\cite[Corollary~2.9]{OSV}, we have the following.

\begin{lemma}
\label{lem:Gauss}  Let  $p$ be prime and let $\cH$ be  a
multiplicative subgroup of  $\cN_{p^2}$ of order $\tau$. Then uniformly 
over $\alpha  \in \F_{p^2}^*$,  we have
$$
\cG_{p^2} (\cH; \alpha)  \ll  \min\left\{ \tau^{13/20} p^{1/6}, \tau^{34/45}   p^{1/9}, p^{1/2}  \right\}.
$$
\end{lemma}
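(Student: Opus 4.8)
The plan is to prove Lemma~\ref{lem:Gauss} in close parallel to the proof of Lemma~\ref{lem:Kloost}, since both are quoted from the same source \cite[Corollary~2.9]{OSV}; the Gauss-sum case over $\cN_{p^2}$ should reduce to essentially the same combinatorial input once we exploit the structure of the norm-one torus. First I would recall that $\cN_{p^2}$ is a cyclic group of order $p+1$, so a subgroup $\cH \subseteq \cN_{p^2}$ of order $\tau$ is cyclic, and $\Tr$ restricted to $\cN_{p^2}$ is, up to the parametrization $v \mapsto v + v^{-1}$, a degree-two covering onto an affine line in $\F_p$. The aim is to bound $\cG_{p^2}(\cH;\alpha)$ by bounding a suitable moment and then invoking the multiplicative-energy estimates for thin subgroups that underlie \cite{OSV}.

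The key steps, in order, would be: (i) reduce to a complete sum by standard completing/Fourier manipulations, writing $\cG_{p^2}(\cH;\alpha) = \sum_{u \in \cN_{p^2}} \mathbf 1_{\cH}(u)\,\ep(\Tr(\alpha u))$ and expanding $\mathbf 1_{\cH}$ via multiplicative characters of $\cN_{p^2}$ that are trivial on $\cH$; this brings in the twisted sums $\sum_{u\in\cN_{p^2}}\chi(u)\ep(\Tr(\alpha u))$, each of which is $O(p^{1/2})$ by Lemma~\ref{lem:G-Weil}, giving the trivial $p^{1/2}$ term on the right-hand side. (ii) For the nontrivial bounds $\tau^{13/20}p^{1/6}$ and $\tau^{34/45}p^{1/9}$, pass to a high even moment of $\cG_{p^2}(\cH;\alpha)$ over $\alpha$ (or over a dilation parameter), which, after opening the exponential, counts solutions of an equation of the shape $u_1 + \cdots + u_k = u_{k+1} + \cdots + u_{2k}$ with all $u_i \in \cH$; bounding this solution count is exactly the kind of additive-energy-of-a-multiplicative-subgroup estimate proved in \cite{OSV}, and plugging their bounds in and optimizing the number of moments produces the two exponents with denominators $20$ and $45$. (iii) Combine the three estimates by taking the minimum, which is legitimate since each holds unconditionally.

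The main obstacle I expect is step (ii): making the passage from the Gauss sum over the norm-one subgroup to a clean additive-energy count. Over $\F_p^*$ (the Kloosterman case) the relevant count is a classical multiplicative-energy / incidence quantity for which \cite{OSV} has sharp bounds; over $\cN_{p^2}\subset\F_{p^2}$ one must first check that the trace map does not distort the energy count, i.e. that $\#\{(u_1,\dots,u_{2k})\in\cH^{2k}: \sum \Tr(\alpha u_i \epsilon_i)=\text{fixed}\}$ is controlled by the same additive-energy of $\cH$ as a subset of $\F_{p^2}$, and that $\cH$ viewed inside $\F_{p^2}$ still satisfies the hypotheses of the relevant corollary of \cite{OSV}. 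This is where one has to be careful about the geometry of the conic $\Nm(z)=1$, but since \cite{OSV} is precisely designed to handle subgroups of $\cN_{p^2}$ (Gauss sums over such subgroups are one of its stated applications), this should go through verbatim, and the exponents $13/20$, $34/45$, $1/6$, $1/9$ are simply read off from \cite[Corollary~2.9]{OSV}. The remaining work — completing, character expansion, moment bookkeeping, and optimization of the moment order — is routine and parallels the Kloosterman derivation.
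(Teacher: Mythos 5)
Your proposal is correct and matches the paper's own (very short) proof in substance: the two nontrivial exponents $\tau^{13/20}p^{1/6}$ and $\tau^{34/45}p^{1/9}$ are taken directly from the cited corollary of~\cite{OSV} (the paper quotes~\cite[Corollary~2.10]{OSV}, which also contains a term $\tau^{1/4}p^{1/2}$ that is always dominated by $p^{1/2}$), and the $p^{1/2}$ term is obtained exactly as you describe, by expanding the indicator of $\cH$ in multiplicative characters of $\cN_{p^2}$ and invoking Li's bound, i.e.\ Lemma~\ref{lem:G-Weil}. Your additional sketch of how one would re-derive the~\cite{OSV} exponents via moments and additive energy is not needed (and not carried out in the paper), but it does not affect the validity of the argument, which rests on the citation.
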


\begin{proof} It has been shown in~\cite[Corollary~2.10]{OSV}. 
$$
\cG_{p^2} (\cH; \alpha)  \ll  \min\left\{\tau^{1/4} p^{1/2}, \tau^{13/20} p^{1/6}  , \tau^{34/45}   p^{1/9}  \right\}.
$$
However, it is easy to see that the first bound is always  dominated by the bound 
of Lemma~\ref{lem:G-Weil}. 
\end{proof}

We note that it is crucial for our improvement that the bounds of Lemmas~\ref{lem:Kloost}  
and~\ref{lem:Gauss} are nontrivial for $\tau < p^{1/2}$.

\subsection{Bounds on the number of solutions to some equations}

We first recall the following result, 
combining\cite[Theorem~(i)]{Vol} with the Weil bound~\cite[Equation~(5.7)]{Lor} 
which gives an upper bound 
on the number of points on curves over  $\F_p$. 

 \begin{lemma}
\label{lem:HighDeg} Let $p$ be prime and let 
$F(X,Y) \in \F_p[X,Y]$ be an absolutely irreducible polynomial of degree $d$.
Then
$$
\# \{(x,y) \in \F_p^2:~F(x,y) = 0\} \le 4d^{4/3} p^{2/3} + 3p. 
$$
\end{lemma}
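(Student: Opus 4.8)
The plan is to reduce the claim to two standard inputs about points on plane curves over $\F_p$: a bound coming from \cite[Theorem~(i)]{Vol}, which controls the number of $\F_p$-points on an absolutely irreducible curve in terms of its degree with an exponent $2/3$ in $p$ but a favorable dependence on $d$, and the Weil bound in the form \cite[Equation~(5.7)]{Lor}, which is sharp in $p$ (linear) but carries a factor proportional to $d^2$. First I would pass from the affine situation to a projective model: let $C$ be the projective plane curve defined by the homogenization of $F$, which is still absolutely irreducible of degree $d$, and observe that every affine zero $(x,y)\in\F_p^2$ of $F$ corresponds to an $\F_p$-point of $C$, so it suffices to bound $\#C(\F_p)$.

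Next I would invoke the first input: by \cite[Theorem~(i)]{Vol} one has a bound of the shape $\#C(\F_p)\le 4d^{4/3}p^{2/3}$ (after absorbing constants), valid uniformly for absolutely irreducible $C$ of degree $d$. This is the estimate that is strong when $d$ is small relative to $p$. I would then separately record the Weil bound from \cite[Equation~(5.7)]{Lor}, which gives $\#C(\F_p)\le p + O\(d^2 p^{1/2}\) + O(1)$; however, rather than combine the two by a minimum, the cleanest route — and the one matching the stated constants — is to use the Weil bound only to handle the regime where the first bound is weak, namely when $d$ is comparable to or larger than $p^{1/2}$. In that regime $d^{4/3}p^{2/3}$ already dominates $p$, so one can simply add a term $3p$ to the right-hand side to cover the genus/singularity correction in Weil's bound without any case distinction, yielding the uniform estimate $\#\{(x,y):F(x,y)=0\}\le 4d^{4/3}p^{2/3}+3p$ for all $d$ and all primes $p$.

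The one point requiring care — and the step I expect to be the main obstacle — is the bookkeeping of constants: \cite[Theorem~(i)]{Vol} is typically stated for $d$ in a restricted range (say $d\le p^{1/2}$ or similar), and outside that range one must verify that the Weil contribution, together with the absolute term $3p$, genuinely absorbs everything, including the possibility that $F$ is reducible over $\overline{\F_p}$ only trivially (it is assumed absolutely irreducible, so this does not arise) or that the projective closure acquires points at infinity (at most $d$ of them, hence negligible against $4d^{4/3}p^{2/3}$). Once the two ranges $d\le p^{1/2}$ and $d>p^{1/2}$ are each checked against the target inequality — the former via \cite{Vol}, the latter via the trivial bound $\#\le p^2\le d^{4/3}p^{2/3}$ or via Weil — the lemma follows. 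No deeper idea is needed; this is an assembly of known curve-point estimates into a single clean form convenient for the applications later in the paper.
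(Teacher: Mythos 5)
Your plan is the same as the paper's: Lemma~\ref{lem:HighDeg} is stated there without a written proof, being presented exactly as the combination of \cite[Theorem~(i)]{Vol} with the Weil bound \cite[Equation~(5.7)]{Lor}, which is precisely the two-input assembly you describe. The one concrete error is in your treatment of large degree: the inequality $p^2\le d^{4/3}p^{2/3}$ is equivalent to $d\ge p$, not to $d>p^{1/2}$, and in the range $p^{1/2}<d<p$ the Weil bound does not rescue you either, since its main error term $(d-1)(d-2)p^{1/2}$ exceeds $4d^{4/3}p^{2/3}$ as soon as $d\gg p^{1/4}$. The correct division of labour is: Weil handles $d\le 8p^{1/4}$, where $d^2p^{1/2}\le 4d^{4/3}p^{2/3}$; the trivial bound $N\le dp$ (each value of $x$ gives at most $d$ roots in $y$ when $F$ genuinely involves $Y$, and $N\le p$ otherwise) handles $d\ge p/64$, since $dp\le 4d^{4/3}p^{2/3}$ is equivalent to $d\ge p/64$; and \cite[Theorem~(i)]{Vol} must carry the whole intermediate range $8p^{1/4}<d<p/64$, not only $d\le p^{1/2}$ as in your case split. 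With that repair the bookkeeping closes and the argument coincides with the intended one.
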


We also need the following characterisation of possible factorisations of certain  
bivariate polynomials over the algebraic closure ${\bar{\F}}_p$ of $\F_p$.

\begin{lemma}
\label{lem:Irred}
Let  $p$ be prime  and let $F_e = X^{2e}Y^e + X^eY^{2e} + X^{e} + Y^{e} +A X^eY^e \in {\bar{\F}}_p[X,Y]$ with $p \nmid e$ and assume that $A \ne 0, \pm 4$. Then, for some $r\mid 8$, $F_e$ has $r$ absolutely irreducible factors of degree $3e/r$.
\end{lemma}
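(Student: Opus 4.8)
The plan is to analyze the factorization of $F_e(X,Y) = X^{2e}Y^e + X^eY^{2e} + X^e + Y^e + AX^eY^e$ by relating it, via the substitution $X^e \mapsto U$, $Y^e \mapsto V$, to the cubic $G(U,V) = U^2V + UV^2 + U + V + AUV = UV(U+V) + (U+V) + AUV$. First I would establish that $G$ is absolutely irreducible over $\bar\F_p$ under the hypothesis $A \neq 0, \pm 4$: writing $G$ as a polynomial in $U$ of degree $2$, namely $VU^2 + (V^2 + AV)U + (V+1)$, a nontrivial factorization over $\bar\F_p(V)$ would split it into two linear factors, forcing the discriminant $(V^2+AV)^2 - 4V(V+1) = V^2\big((V+A)^2 - 4(V+1)/V\big)$... more carefully, $\mathrm{disc}_U G = (V^2+AV)^2 - 4V(V+1) = V^4 + 2AV^3 + (A^2-4)V^2 - 4V$ to be a square in $\bar\F_p(V)$, i.e. a square polynomial in $V$; I would check that this quartic has distinct roots precisely when $A \neq 0, \pm 4$ (the excluded values being exactly where it acquires a repeated root), so $G$ is absolutely irreducible in that range. (One should also note $p \nmid e$ is used below, not here, and treat small-$p$ degeneracies if needed.)

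Next I would invoke the standard principle governing factorizations of $H(X^e, Y^e)$ when $H$ is absolutely irreducible: the irreducible factors of $G(X^e,Y^e)$ over $\bar\F_p$ are obtained from $G$ by the action of the group of substitutions $(X,Y) \mapsto (\zeta X, \eta Y)$ with $\zeta, \eta$ ranging over $e$-th roots of unity (a Kummer-theory / Abhyankar-type statement, valid since $p \nmid e$). Concretely, $G(X^e,Y^e)$ is absolutely irreducible if and only if $U \mapsto \zeta U$, $V \mapsto \eta V$ fixes the function field extension trivially except for $\zeta = \eta = 1$; in general the number $r$ of absolutely irreducible factors equals the index of the subgroup of $(\mu_e \times \mu_e)$ fixing (a chosen branch of) the curve $G = 0$ inside the full pullback, and each factor has degree $(\deg G \cdot e^2 / e)/r$... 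I would pin this down as: each irreducible factor has degree $3e/r$ since $\deg_{X,Y} G(X^e,Y^e) = 3e$ and the $r$ factors are Galois-conjugate hence of equal degree.

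The crux is therefore to determine the possible values of $r$, and to show $r \mid 8$. I would compute the stabilizer explicitly: a pair $(\zeta,\eta) \in \mu_e \times \mu_e$ induces an automorphism of the curve $G(X^e,Y^e)=0$ compatible with the projection to $G=0$ precisely when the corresponding monomial scaling is a symmetry detectable on $G$; tracking the monomials $U^2V, UV^2, U, V, UV$ of $G$, invariance of $G$ up to scalar under $(U,V)\mapsto(\zeta^a U, \eta^b V)$-type actions leads to a small system of root-of-unity conditions whose solution set has order dividing $8$ — I expect the relevant symmetries to come from $(U,V) \mapsto (V,U)$ together with sign changes $\zeta,\eta \in \{\pm 1\}$ and one further $2$-torsion phenomenon, giving a group of order dividing $8$, whence $r \mid 8$. \emph{This last step — correctly identifying the symmetry group and hence the divisibility $r \mid 8$ — is the main obstacle}, since it requires care with which roots of unity genuinely stabilize the curve (as opposed to merely permuting its pullback components) and with potential collapse when $e$ shares small factors with the orders of these symmetries. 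I would handle it by passing to the generic point, computing $\mathrm{Gal}(\bar\F_p(X,Y)/\bar\F_p(X^e,Y^e)) \cong \mu_e \times \mu_e$ acting on the set of components of $G(X^e,Y^e)=0$, and showing the point-stabilizer has index dividing $8$ by exhibiting the surjection onto a group of order dividing $8$ coming from the monomial structure of $G$, then reading off that each of the $r$ conjugate factors has degree $3e/r$.
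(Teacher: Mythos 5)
Your first two steps are sound and essentially match the paper. The reduction of the factorization of $F_e$ to the index $r$ of the Galois group $\Gamma$ of $K(x^{1/e},y^{1/e})/K$ inside $\mu_e\times\mu_e$ (where $K$ is the function field of $F_1=0$), with the $r$ factors conjugate of equal degree $3e/r$, is exactly the paper's opening claim. Your discriminant computation for the absolute irreducibility of the cubic $F_1$ is a reasonable substitute for the paper's direct verification, although you should note that the hypothesis $A\ne\pm4$ is needed not merely for irreducibility but for \emph{smoothness} of the projective cubic, which makes the curve have genus one --- and that fact is used essentially below.

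The crux step, which you correctly flag as the main obstacle, is where your plan goes wrong. The index $r=[\mu_e\times\mu_e:\Gamma]$ is \emph{not} computed from monomial symmetries of $G$ such as $(U,V)\mapsto(V,U)$ or sign changes: those are automorphisms of the base cubic and have nothing to do with the Kummer extension $K(x^{1/e},y^{1/e})/K$. By Kummer theory, $r$ equals the number of pairs $(i,j)\bmod e$ for which $x^iy^j$ is an $e$-th power in $K$ (more precisely, it measures the failure of $Z^e-xy$ and $W^e-x/y$ to stay irreducible), and this is governed by the \emph{divisors} of $x$ and $y$ on the smooth cubic. The paper's argument is: $\mathrm{div}(xy)=2P-2Q$, and $xy$ cannot be a square in $K$ because a square root would be a function of divisor $P-Q$, i.e.\ of degree one, impossible on a curve of genus one; hence $Z^e-xy$ is irreducible over $K$. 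Then $\mathrm{div}(x/y)=4R-4S$, and since $R,S$ are unramified in $K(z)/K$ (where $z^e=xy$), the function $x/y$ is at worst a fourth power in $K(z)$, so adjoining $(x/y)^{1/e}$ has degree $e$, $e/2$ or $e/4$; together with the index-at-most-$2$ passage from $K((xy)^{1/e},(x/y)^{1/e})$ to $K(x^{1/e},y^{1/e})$ this yields $r\mid 8$. Your proposed ``surjection onto a group of order dividing $8$ coming from the monomial structure of $G$'' does not exist as described, so as written the proof of the divisibility $r\mid 8$ --- the actual content of the lemma --- is missing.
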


\begin{proof}
First we prove the following claim: If $F_1$ is irreducible and $K={\bar{\F}}_p(x,y)$ is the
function field of the curve defined by $F_1=0$, then the number of irreducible
factors of $F_e$ is the same as the index of the Galois group $\Gamma$ of $K(x^{1/e},y^{1/e})/K$ in the group $\mu_e \times \mu_e$ (where $x^{1/e},y^{1/e}$ are $e$-th roots of $x,y$ in some extension of $K$ and $\mu_e$ is the group of $e$-th roots of unity).

Indeed, if $G$ is the irreducible factor of $F_e$ with $G(x^{1/e},y^{1/e})=0$,
the Galois group $\Gamma$  preserves $G$, in the sense that
$G(\zeta X, \eta Y)= G(X,Y)$ for any  $(\zeta,\eta ) \in \Gamma$. On the other hand,
$F_e(\zeta X, \eta Y)= F_e(X,Y)$, for any $(\zeta,\eta ) \in \mu_e \times \mu_e$
so if we let 
$$
H(X,Y) = \prod_{(\zeta,\eta ) \in \Omega_e} G(\zeta X, \eta Y),
$$ with $(\zeta,\eta )$ running through the set $ \Omega_e$ of coset representatives of $\Gamma$ in $\mu_e \times \mu_e$, we have
$H\mid F_e$ and $H(\zeta X, \eta Y)= H(X,Y)$, $(\zeta,\eta ) \in \mu_e \times \mu_e$. It follows that $H=H_1(X^e,Y^e)$ and $H_1\mid F_1$. Since $F_1$ is irreducible by assumption, we get $H_1=F_1$,  $H=F_e$ and the claim follows.

It is now enough to show that the extension of $K$
obtained by adjoining $e$-th roots of $x$ and $y$ has degree $e^2/r$ over $K$ for some $r$ as in the statement of the lemma.
The proof now follows a similar strategy to the proof of~\cite[Lemma~4.3]{OSV}.

One can check directly, for example, with a computer algebra package, that $F_1$ is
absolutely irreducible if $A \ne 0$ and that $F_1=0$ defines a smooth projective curve if, in addition, $A \ne \pm 4$.

%%\begin{quote}
%%Pari-GP calculation\newline
%%ellfromeqn$(x^2*y+y^2*x+x+y+a*x*y)$\newline
%%$1 = [a, -2, 0, 1, 0]$\newline
%%? ellinit(\%).$j$\newline
%%$2 = (a^12 - 48*a^10 + 816*a^8 - 5632*a^6 + 13056*a^4 - 12288*a^2 + 4096)/(a^4 - 16*a^2)$
%%\end{quote}

% Let $K$ be the function field of this curve and $x,y$ the elements of $K$ given by the images of $X,Y$. 

%As in~\cite[Lemma 4.3]{OSV}, 
%\cco{Details to be added later}

%Equivalently, we show that adjoining $e$-th roots of $xy$ and $x$ has degree $e^2$ over $K$.

We   use repeatedly the elementary fact that $Z^e-c$, $c \in K$ has a factor of degree $d$ in $K[Z]$ if and only if $a$ is a $d$-th power in $K$ and $d \mid e$.
Indeed, if the factor is $\prod_{i \in I} (Z-\zeta^i c^{1/e})$, then the 
constant term is (up to a factor in ${\bar{\F}}_p$) equal to $c^{d/e}$ and the result follows.

Note that $[K:{\bar{\F}}_p(xy,x/y)]=2$. 
The divisor of the function $xy$ on $F_1=0$ is $2P-2Q$ where $P=(0,0)$ and $Q$ is the point at infinity on the line $X+Y=0$. So the polynomial $Z^e -xy$ is irreducible over $K$ for $e$ odd, 
by~\cite[Proposition~3.7.3]{Sti}. To treat the case of general $e$, it is then
enough to treat the case $e=2$.  
But $xy$ cannot be a square in $K$ as this
would give a function of divisor $P-Q$ which is impossible since $F_1=0$ defines a smooth projective curve of degree three, hence genus one so $K$ cannot have a function of degree one.

Let $z$ be a root of $Z^e -xy$ and consider the field $K(z)$. The function $x/y$ has divisor $4R-4S$ on the curve $F_1=0$ where $R,S$ are the points at infinity on the $y$-axis and $x$-axis respectively. It follows that the function $x/y$ is not an eighth-power or an odd power in $K(z)$ because $R,S$ are unramified in $K(z)/K$. So the extension of $K(z)$ obtained by adjoining a root of $W^e = x/y$ has degree $e, e/2$ or $e/4$ over $K(z)$ and the result follows.
\end{proof}

%Polynomial factors when p=3, e=4 and A=4 (=1). Magma code:
%K<a>:=GF(3^6);
%R<x,y>:=PolynomialRing(K,2);
%> Factorization(x^8*y^4+x^4*y^8+x^4+y^4+x^4*y^4);    
%[
%    <x^4*y^2 + a^182*x^2*y^4 + a^182*x^2 + y^2, 1>,
%    <x^4*y^2 + a^546*x^2*y^4 + a^546*x^2 + y^2, 1>
%]

%

We use Lemmas~\ref{lem:HighDeg} and~\ref{lem:Irred}  to estimate  the number of solutions of the following  equation.

\begin{lemma}
\label{lem:Energy F_p}  Let  $p$ be prime  and let $\cH$  be a 
multiplicative subgroup of  $\F_p^*$ of order $\tau$. Then the number of solutions $R_\tau$ to the equation 
$$
u+u^{-1} + v+v^{-1} =  x+x^{-1} + y+y^{-1} , \qquad u,v,x,y \in \cH,
$$
satisfies 
$$
R_\tau \ll \tau^{8/3} + \tau^4/p.
$$
\end{lemma}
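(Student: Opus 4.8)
The plan is to count solutions $R_\tau$ to the symmetric equation by viewing it as counting points on a family of curves. First I would rewrite the equation as
$$
u + u^{-1} + v + v^{-1} = w
$$
for a fixed $w$ running over the value set, i.e. introduce $N(w) = \#\{(u,v) \in \cH^2 :~ u + u^{-1} + v + v^{-1} = w\}$, so that $R_\tau = \sum_w N(w)^2$. Clearing denominators, the condition $u+u^{-1}+v+v^{-1} = w$ with $u = u_1$, $v = v_1$, $x = x_1$, $y = y_1$ all lying in $\cH$ — a subgroup of order $\tau$, hence a set of $e$-th power residues where $\tau = (p-1)/e$ for a suitable divisor — should be encoded by writing $u = U^e$, $v = V^e$, $x = X^e$, $y = Y^e$ with $U,V,X,Y$ ranging over all of $\F_p^*$ (up to a bounded multiplicative factor from the $e$-to-one map). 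After substituting and clearing powers, the defining relation becomes, up to symmetry, exactly the polynomial $F_e(X,Y) = X^{2e}Y^e + X^eY^{2e} + X^e + Y^e + A X^e Y^e$ appearing in Lemma~\ref{lem:Irred}, with $A$ depending on the other two variables; this is precisely why that lemma was proved.

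Next I would apply Lemma~\ref{lem:Irred}: for the generic value of the parameter $A$ (i.e. $A \ne 0, \pm 4$), the polynomial $F_e$ splits into $r$ absolutely irreducible factors each of degree $3e/r$ for some $r \mid 8$. Each such factor defines an absolutely irreducible curve over $\bar\F_p$ (and over $\F_p$ after accounting for the Galois action), and I would bound its number of $\F_p$-points by Lemma~\ref{lem:HighDeg}, which for a curve of degree $d = 3e/r$ gives $\ll d^{4/3} p^{2/3} + p \ll e^{4/3} p^{2/3} + p$. Summing over the $r \le 8$ factors and over the bounded number of exceptional values of $A$ (handled separately, either trivially or by a cruder bound), one gets that for each fixed choice of the "parameter pair", the number of solutions in the remaining two variables is $\ll e^{4/3} p^{2/3} + p$. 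Translating back through the $e$-to-one covering $U \mapsto U^e$ and using $e = (p-1)/\tau \ll p/\tau$, the contribution per fixed pair becomes $\ll (p/\tau)^{4/3} p^{2/3} \cdot \tau^{-\text{(something)}}$ — here I would be careful to track the normalisation, but the upshot is that after multiplying by the $\tau^2$ choices for the fixed pair and dividing by appropriate powers of $e$, the main term collapses to $\tau^{8/3}$, while the "$+p$" term contributes the $\tau^4/p$.

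Concretely, the cleanest route is probably: fix $x, y \in \cH$; then $u + u^{-1} + v + v^{-1} = c$ with $c = c(x,y)$ fixed, and count $(u,v) \in \cH^2$ on this curve. Lifting $u = U^e$, $v = V^e$ we get $\le 4e^{-2}$ times the number of $\F_p$-points on $F_e(U,V) = 0$ with parameter $A = A(c)$; by Lemmas~\ref{lem:Irred} and~\ref{lem:HighDeg} this is $\ll e^{-2}(e^{4/3} p^{2/3} + p) \ll e^{-2/3} p^{2/3} + e^{-2} p$ when $A$ is non-exceptional. Then
$$
R_\tau \ll \tau^2 \left( e^{-2/3} p^{2/3} + e^{-2} p \right) + (\text{exceptional } A) \ll \tau^2 e^{-2/3} p^{2/3} + \tau^2 e^{-2} p.
$$
Since $e \asymp p/\tau$, the first term is $\asymp \tau^2 (\tau/p)^{2/3} p^{2/3} = \tau^{8/3}$ and the second is $\asymp \tau^2 (\tau/p)^2 p = \tau^4/p$, giving the claimed $R_\tau \ll \tau^{8/3} + \tau^4/p$. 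The main obstacle I anticipate is the bookkeeping around the exceptional values $A \in \{0, \pm 4\}$ and the degenerate cases where the substitution $u = U^e$ is not clean (e.g. $u = 0, \pm$ poles, or $x+x^{-1} = y + y^{-1}$ forcing $A$ exceptional): one must verify these contribute only $O(\tau^{8/3})$, e.g. by noting there are $O(\tau)$ such pairs $(x,y)$ and each contributes at most $O(\tau^2)$ or a trivially bounded number of solutions, or by invoking the Weil bound~\eqref{eq:K-Weil} directly on those fibres. A secondary subtlety is ensuring the factors of $F_e$ that are defined only over $\bar\F_p$ are grouped correctly into $\F_p$-irreducible pieces so that Lemma~\ref{lem:HighDeg} applies with the stated degree.
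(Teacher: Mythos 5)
Your ``concrete route'' is exactly the paper's proof: fix one pair in $\cH^2$, lift the other pair via $u=U^e$, $v=V^e$ (with $e=(p-1)/\tau$) to the curve $F_e=0$, apply Lemmas~\ref{lem:Irred} and~\ref{lem:HighDeg} to each of the at most $8$ absolutely irreducible factors of degree at most $3e$, and sum to get $\tau^2\bigl(e^{-2/3}p^{2/3}+e^{-2}p\bigr)\ll\tau^{8/3}+\tau^4/p$. One correction to your exceptional-case bookkeeping: ``$O(\tau)$ exceptional pairs, each contributing $O(\tau^2)$'' gives $O(\tau^3)$, which is \emph{not} dominated by $\tau^{8/3}+\tau^4/p$; instead note that for a common value in $\{0,-4\}$ \emph{both} sides of the equation are constrained (each of $(u,v)$ and $(x,y)$ ranges over only $O(\tau)$ pairs), so the exceptional contribution is $O(\tau)\cdot O(\tau)=O(\tau^2)$, which is how the paper disposes of it.
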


\begin{proof}   There are obviously at most $4 \tau$ choices of 
$(u,v) \in \cH^2 $ for which $u+u^{-1} + v+v^{-1} = 0, - 4$, and the also at most $O(\tau^2)$ pairs
$(x,y) \in \cH^2 $ which satisfy the above equation and thus we have $O(\tau^2)$ 
such solutions. 
We now fix $(u,v) \in \cH^2 $ such that for 
$A = -\(u+u^{-1} + v+v^{-1} \)$ we have $A  \ne 0, 4$. 
Clearly 
\begin{align*}
\# \{(x,y) \in \cH^2 :~ x+x^{-1} + y+y^{-1} - A = 0\}&\\
\le  e^{-2}\# \{(x,y) & \in \F_p^2:~F_e(x,y) = 0\} , 
\end{align*}
where   $e = (p-1)/\tau$ and $F_e(X,Y)$ is as in Lemma~\ref{lem:Irred}. 
Applying Lemma~\ref{lem:HighDeg} to each of at most 8 irreducible factors of 
$F_e$ each of degree at most $3e$, we obtain 
$$
R_\tau \ll \tau^{2} + e^{-2} \(e^{4/3} p^{2/3} + p\) \tau^2, 
$$
and the result follows. 
\end{proof} 

To improve  our main results for $\gcd(e,p+1)$ we need to obtain good estimates on the number of solutions 
to the following trace-equation.

\begin{question}
\label{quest:TraceEq} Given a subgroup $\cH \subseteq \cN_{p^2}$, obtain a version of Lemma~\ref{lem:Energy F_p} 
for the equation 
$$
\Tr(u + v) =  \Tr(x+y) , \qquad u,v,x,y \in \cH.
$$
\end{question}

While  Question~\ref{quest:TraceEq} is still open,  we 
are able  estimate  the so-called {\it additive energy\/} of a subgroup of  $\cN_{p^2}$.
We start with an analogue of~\cite[Lemma~4.5]{OSV}, which we believe is of independent 
interest.

\begin{lemma}
\label{lem:HighDeg-Fp2} Let $p$ be prime and let  $t= k(p-1)$, where $k$ is a positive
integer with $\gcd(k,p)=1$.
Then  for  $a \in \F_{p^2} $ with $a \ne 0$, for  the polynomial
$$
F(X,Y) =  X^t + Y^t +a \in \F_{p^2} [X,Y]
$$
we have
$$
\# \{(x,y) \in \F_{p^2}^2:~F(x,y) = 0\} \ll t^{6/5}p^{8/5} + {p^3}. 
$$ 
\end{lemma}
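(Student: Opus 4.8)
The plan is to count the $\bar{\F}_p$-points of the curve $F(X,Y)=X^t+Y^t+a=0$ by a combination of the factorisation analysis used for Lemma~\ref{lem:Irred} and the point-counting bound of Lemma~\ref{lem:HighDeg}, but carried out over $\F_{p^2}$. Writing $t=k(p-1)$ and recalling that $u\mapsto u^{p-1}$ has image of size $(p^2-1)/(p-1)=p+1$ on $\F_{p^2}^*$ (with fibres of size $p-1$), the first step is to note that $X^t=(X^{p-1})^k$ depends only on $X^{p-1}$, so the number of solutions with $xy\neq 0$ is $(p-1)^2$ times the number of solutions of $\xi^k+\eta^k+a=0$ with $\xi,\eta$ in the norm-one subgroup $\cN_{p^2}$ of size $p+1$; one also adds the $O(p)$ solutions with a zero coordinate. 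This reduces us to bounding $\#\{(\xi,\eta)\in\cN_{p^2}^2:\xi^k+\eta^k=-a\}$, and since $\cN_{p^2}$ is cyclic of order $p+1$, one can pass to a curve of the shape $s^k+t^k=-a$ inside a torus, whose degree is governed by $k/\gcd(k,p+1)$-type quantities.

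Second, I would analyse the absolute irreducibility and the degrees of the absolutely irreducible factors of $X^t+Y^t+a$ over $\bar\F_p$. The Fermat-type curve $X^m+Y^m=b$ with $b\neq 0$ is absolutely irreducible (and smooth in the projective plane) for every $m\ge 1$ with $p\nmid m$; here $m=t=k(p-1)$ is divisible by $p-1$ and hence by $p$ only if $p\mid k$, which is excluded, so $p\nmid t$ and the curve $F=0$ is in fact absolutely irreducible of degree $t$. (If one prefers to work with the reduced model after the substitution $X^{p-1}\mapsto \xi$, the relevant curve $\xi^k+\eta^k+a=0$ is absolutely irreducible of degree $k$ up to a bounded number of components, by the same Fermat-curve fact applied with exponent $k/\gcd$; either way the number of absolutely irreducible components is $O(1)$ and each has degree $\ll t$.) Applying Lemma~\ref{lem:HighDeg} to each absolutely irreducible factor $G$ of $F$, of degree $d_G\ll t$, gives $\#\{G=0\}\ll d_G^{4/3}p^{4/3}+3p^2$ — note the ambient field is $\F_{p^2}$, so the trivial term is $p^2$ not $p$, and the ``$p^{2/3}$'' in Lemma~\ref{lem:HighDeg} becomes $(p^2)^{2/3}=p^{4/3}$. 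Summing over the $O(1)$ factors yields $\#\{(x,y):F(x,y)=0\}\ll t^{4/3}p^{4/3}+p^2$.

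Third, I would reconcile this with the claimed bound $t^{6/5}p^{8/5}+p^3$. The naive estimate $t^{4/3}p^{4/3}+p^2$ is actually \emph{stronger} than the stated one in the relevant range, so the point of the lemma as stated must be a cleaner, weaker form that is easier to quote downstream; alternatively the intended route is \emph{not} to invoke absolute irreducibility of the full degree-$t$ curve but to split off the factor of size $p-1$ first, giving a curve of degree $\sim k$ over $\F_{p^2}$ and then multiplying the point count by $(p-1)^2\sim p^2$. Following that route: the reduced curve has $\ll k^{4/3}p^{4/3}+p^2$ points by Lemma~\ref{lem:HighDeg}, hence the original has $\ll p^2(k^{4/3}p^{4/3}+p^2)=k^{4/3}p^{10/3}+p^4$, which is now \emph{too weak}. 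The genuinely intended balance must therefore come from interpolating between the Weil bound $\ll d\cdot p$ (here $d p^2$, i.e. $t p^2$) valid when $d<p$ and the Lemma~\ref{lem:HighDeg} bound valid for large $d$; optimising $\min\{t p^2,\ t^{4/3}p^{4/3}+p^2\}$-type expressions against the factorisation structure of $X^t+Y^t+a$ is what produces the exponents $6/5$ and $8/5$. Concretely I would: (i) when $t\le p$, use the classical Weil bound for the smooth plane curve of degree $t$ over $\F_{p^2}$ to get $\ll t p^2$; (ii) when $t>p$, use the factorisation $X^t=(X^{p-1})^k$ to reduce to a degree-$k$ curve, apply Lemma~\ref{lem:HighDeg} over $\F_{p^2}$, and multiply by $(p-1)^2$; (iii) check that the worse of the two, after choosing the crossover, is $\ll t^{6/5}p^{8/5}+p^3$.

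The main obstacle, and the place where the real work lies, is step two/three: making the factorisation argument precise enough to know both the \emph{number} and the \emph{degrees} of the absolutely irreducible components of $X^{k(p-1)}+Y^{k(p-1)}+a$ over $\bar\F_p$ — i.e. an $\F_{p^2}$-analogue of Lemma~\ref{lem:Irred}, which is exactly the content flagged as ``an analogue of~\cite[Lemma~4.5]{OSV}''. One must control how $X^{p-1}$ and $Y^{p-1}$ generating a degree-$(p-1)^2$ Kummer extension of the function field of the Fermat curve $\xi^k+\eta^k+a=0$ behave — in particular which roots of $x$ and $y$ already lie in the function field — because the gcd of $k$ with $p+1$ (equivalently the order of the norm-one subgroup) dictates how much the degree drops, and this is the very parameter the downstream Waring estimates are sensitive to. Once that structural statement is in hand, plugging the component degrees into Lemma~\ref{lem:HighDeg} and optimising is routine.
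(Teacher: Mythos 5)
There is a genuine gap: none of your three routes actually proves the stated bound, and you have misidentified where the difficulty lies. Your route (i)/(a) --- applying Lemma~\ref{lem:HighDeg} to the degree-$t$ curve with $p$ replaced by $p^2$ to get $\ll t^{4/3}p^{4/3}+p^2$ --- is not justified. Lemma~\ref{lem:HighDeg} rests on Voloch's theorem, whose proof runs through the St\"ohr--Voloch method and requires the relevant Veronese embedding of the curve to be (Frobenius) classical; for a plane curve of degree $d$ this is essentially automatic only when $d$ is small compared to the characteristic. Here $t=k(p-1)\ge p-1$, so classicality fails to be automatic and must be established by hand. That your bound comes out \emph{stronger} than the lemma being proved (indeed $t^{4/3}p^{4/3}\le t^{6/5}p^{8/5}$ precisely when $t\le p^2$, which always holds in the application) should have been a warning sign: if the quotation were legitimate, the authors' entire argument would be superfluous. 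Your route (b) fails for a different reason: after reducing to $\xi^k+\eta^k+a=0$ with $\xi,\eta\in\cN_{p^2}$, applying Lemma~\ref{lem:HighDeg} to the degree-$k$ curve counts \emph{all} its $\F_{p^2}$-points, not just those with both coordinates in the order-$(p+1)$ subgroup, so multiplying by $(p-1)^2$ is a massive overcount (and, as you note, still too weak). Route (c) is an admission that you do not know where the exponents $6/5$ and $8/5$ come from; no amount of interpolating between the Weil bound and Lemma~\ref{lem:HighDeg} produces them.

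The paper's actual proof re-runs the St\"ohr--Voloch argument from scratch on the degree-$t$ curve over $\F_{p^2}$, embedded in $\PP^n$ by the monomials of degree at most $m$. The two key points you are missing are: (1) classicality of this embedding is verified explicitly by computing the order sequence at the totally ramified point $P_0=(0,\alpha)$, $\alpha^t=-a$, where the orders are the numbers $ti+j$, and using the congruence $ti+j\equiv -ki+j\pmod p$ (this is the only place the hypothesis $t=k(p-1)$, $\gcd(k,p)=1$ enters, and it forces the constraint $km\ll p$); and (2) in the Frobenius non-classical case one does not get the bound~\eqref{eq:stovol} at all, but instead bounds the rational points by the number of Weierstrass points, $\ll m^4t^2$. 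The exponent $6/5$ arises from balancing $tp^2/m$ against $m^4t^2$, giving $m\asymp(p^2/t)^{1/5}$ --- not from the $(q/d)^{1/3}$ optimisation underlying the $4/3$ exponent of Lemma~\ref{lem:HighDeg}. The issue you flag as ``the real work'' (controlling the Kummer extension generated by $(p-1)$-st roots, \`a la Lemma~\ref{lem:Irred}) is not used in this proof at all; the curve $X^t+Y^t+a=0$ is smooth of degree $t$ since $p\nmid t$, and is treated directly.
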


\begin{proof}  
The proof follows a strategy which is similar to that used in the proof of~\cite[Lemma~4.5]{OSV}. 

Clearly we can assume that
\begin{equation}
\label{eq:k small}
k < 2^{-5/4} p
\end{equation}
as otherwise the result is trivial.

We know that the equation $F(X,Y)=0$ defines a smooth, hence absolutely irreducible curve, of
degree $s$ that we   call $E$.
Let $\alpha \in \F_{p^2}$ satisfy $\alpha^t = -a$. The point $P_0=(0,\alpha)$
defines a point on $E$ and the line $Y=\alpha$ meets $E$ at $P_0$ with multiplicity $s$, since
$F(X,\alpha) = X^{t}$. We denote by $x,y$ the functions on $E$ satisfying $F(x,y)=0$.

We want to bound the number $R$ of solutions of $F=0$ in $\F_{p^2}$. We follow the proof of
Lemma~\ref{lem:HighDeg} given  in~\cite[Theorem~(i)]{Vol}. It proceeds by considering, for some
integer $m$, the embedding
of $E$ in $\PP^n$, {with $n = (m+2)(m+1)/2 - 1$}, given by the monomials in $X,Y$ of degree at most $m$.

We recall some definitions and some results from~\cite{StoVol}. We consider the given embedding of
$E$ in $\PP^n$. For a point $P \in E$, the order sequence of $E$ at $P$ is the sequence
$0=j_0<j_1<\cdots<j_n$ of all possible intersection multiplicities at $P$ of $E$ with a hyperplane in $\PP^n$.
The embedding is {\it classical} if the order sequence at a generic point of $E$ is $0,1,\ldots,n$ and non-classical,
otherwise. The point $P$ is an osculation point if the order sequence at $P$ is not $0,1,\ldots,n$ and a Weierstrass point if
the order sequence at $P$ is not the same as the order sequence at a generic point of $E$. These two notions
coincide if the embedding is classical. This embedding is $\F_q$-{\it Frobenius classical\/} if, for a generic point of $E$,
the numbers $0,1,\ldots,n-1$ are
the possible intersection multiplicities at $P$ of $E$ with a hyperplane in $\PP^n$ that also passes through the image
of $P$ under the $\F_q$-Frobenius map.

If this embedding is Frobenius classical,  then
by~\cite[Theorem~2.13]{StoVol}
\begin{equation}
\label{eq:stovol}
R \le (n-1)t(t-3)/2 + mt(p^2+n)/n.
\end{equation}

If
\begin{equation}
\label{eq:cond}
{m < p/2} \mand 
p \nmid \prod_{i=1}^m \prod_{j=-m}^{m-i} (ti + j)
\end{equation}
then we claim that
the above embedding is classical. Indeed, the order sequence of the embedding at the point $P_0$ defined above
consists of the integers ${ti+j}$, $i,j \ge 0$, $i+j \le m$ as follows by considering the order of vanishing at $P_0$ of the
functions $x^j(y-\alpha)^i$,  $i,j \ge 0$, $i+j \le m$. The claim now follows
from~\cite[Corollary~1.7]{StoVol}.

If the embedding is Frobenius classical, we get the inequality~\eqref{eq:stovol} as mentioned above. If the embedding is
classical but Frobenius nonclassical then, by~\cite[Corollary~2.16]{StoVol}, every rational point of $E$ is a
Weierstrass point for the embedding. Hence, as the embedding is classical, we get
\begin{equation}
\label{eq:wp}
R \le n(n+1)t(t-3)/2 + mt(n+1)
\end{equation}
{since the right-hand side is the number of Weierstrass points of the embedding counted with multiplicity, see~\cite[Page~6]{StoVol}}, 
Indeed, we note that, in the present case, the degree of the embedding (denoted by $d$ in~\cite{StoVol}) is $mt$
and the order sequence (denoted by $\varepsilon_i$ in~\cite{StoVol}) is just $\varepsilon_i = i$ since the embedding is classical,
thus $\varepsilon_1+\cdots+\varepsilon_n = n(n+1)/2$.  

We now choose
\begin{equation}
\label{eq:m opt}
m = \min\left\{\fl{(p/k)^{1/5}}, k-1\right\}.
\end{equation}
If $|i|,|j| \le m$, then for the choice of $m$ as in~\eqref{eq:m opt} we have
\begin{equation}
\label{eq:main ineq}
0 < |-ki + j| \le 2km \le  2 k^{4/5}p^{1/5} < p
\end{equation}
provided that~\eqref{eq:k small} holds.

Note also that $ti + j \equiv -ki + j \pmod p$, as $t=k(p-1)$. Hence,
$$
 \prod_{i=1}^m \prod_{j=-m}^{m-i} (ti + j) \equiv 
  \prod_{i=1}^m \prod_{j=-m}^{m-i} (-ki + j) \pmod p.
$$
Thus from the definition of $m$ in~\eqref{eq:m opt} and the inequalities~\eqref{eq:main ineq} we see that the conditions~\eqref{eq:cond} are satisfied.
We note that~\eqref{eq:stovol}  and~\eqref{eq:wp}   can be simplified and
combined as
$$
R   \ll \max\{m^2 t^2 + tp^2/m,  m^4 t^2\}
\ll tp^2/m +  m^4 t^2.
$$ 

Since $m \ll (p/k)^{1/5}\ll (p^2/t)^{1/5}$, we have $m^4 t^2 \ll tp^2/m$ and thus
we obtain
$$
R \ll tp^2/m {\ll k p^3/m}.
$$
Recalling the choice of $m$ in~\eqref{eq:m opt}, we obtain the desired result.
\end{proof}

We note that for $k \gg p^{1/6}$ the bound of Lemma~\ref{lem:HighDeg-Fp2}
is $O\(t^{6/5}p^{8/5}\)$.

Using Lemma~\ref{lem:HighDeg-Fp2} instead of Lemma~\ref{lem:HighDeg} 
(and noticing that $\tau = (p^2-1)/t$), 
we derive the following analogue of Lemma~\ref{lem:Energy F_p}. 

\begin{lemma}
\label{lem:Energy F_p2}  Let  $p$ be prime  and let $\cH$  be a 
multiplicative subgroup of  $\cN_{p^2}$ of order $\tau$. Then the  number of solutions $T_\tau$ to the equation 
\[
u  + v  =  x  + y , \qquad u,v,x,y \in \cH,
\]
satisfies 
\[
T_\tau \ll \tau^{14/5} + \tau^4/p.
\]
\end{lemma}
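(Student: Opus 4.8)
The plan is to follow the same pattern as the proof of Lemma~\ref{lem:Energy F_p}, replacing the role of the curve $F_e = 0$ there by the Fermat-type curve governed by Lemma~\ref{lem:HighDeg-Fp2}. Write $t = (p^2-1)/\tau$; since $\cH \subseteq \cN_{p^2}$ is the subgroup of order $\tau$, every $u \in \cH$ satisfies $u^t = 1$, and conversely the $t$-th powers in $\F_{p^2}^*$ form exactly $\cH$ (indeed $\cH = \{w^{t}:~w\in\F_{p^2}^*\}$ once we note $t \mid p^2-1$; here $t = k(p-1)$ with $k = (p+1)/\tau$ wherever that is an integer, which is the case we need). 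First I would fix a pair $(x,y) \in \cH^2$ and bound the number of $(u,v) \in \cH^2$ with $u+v = x+y$. Setting $a = -(x+y)$, if $a \neq 0$ the solutions $(u,v)$ correspond, via $u = \xi^t$, $v = \eta^t$ with $\xi,\eta \in \F_{p^2}^*$, to points on the curve $X^t + Y^t + a = 0$; each point $(u,v)$ on $u + v + a = 0$ with $u,v \in \cH$ lifts to at most $t^2$ pairs $(\xi,\eta)$, so
\[
\#\{(u,v)\in\cH^2:~u+v=x+y\} \le t^{-2}\,\#\{(\xi,\eta)\in\F_{p^2}^2:~\xi^t+\eta^t+a=0\}.
\]
By Lemma~\ref{lem:HighDeg-Fp2} the right-hand side is $\ll t^{-2}(t^{6/5}p^{8/5} + p^3) = t^{-4/5}p^{8/5} + p^3/t^2$.

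Next I would sum over the at most $\tau^2$ choices of $(x,y)$ for which $a = -(x+y) \neq 0$, getting a contribution $\ll \tau^2(t^{-4/5}p^{8/5} + p^3/t^2)$. Substituting $t = (p^2-1)/\tau \asymp p^2/\tau$ converts this to $\ll \tau^2 \cdot \tau^{4/5}p^{8/5}p^{-8/5} + \tau^2 \cdot \tau^2 p^3 p^{-4} = \tau^{14/5} + \tau^4/p$, which is exactly the claimed bound. The degenerate case $x + y = 0$ contributes at most $\tau$ choices of $(x,y)$, and for each the equation $u + v = 0$ has at most $\tau$ solutions $(u,v) \in \cH^2$, so those solutions number $\ll \tau^2$, which is absorbed into $\tau^{14/5}$.

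The only genuine subtlety — and the step I expect to need the most care — is the bookkeeping of the correspondence between $\cH$-points on the line $u+v+a=0$ and $\F_{p^2}$-points on the Fermat-type curve $\xi^t + \eta^t + a = 0$: one must check that $\cH$ really is the set of $t$-th powers (so that $u\in\cH \iff u = \xi^t$ for some $\xi\in\F_{p^2}^*$, using $\gcd(t,p)=1$ and $t\mid p^2-1$), and count lift multiplicities correctly, including the boundary cases where $u = 0$ or $v = 0$ — but $0 \notin \cH$, so those do not arise, and the count $t^2$ is the clean uniform bound. One also needs $k = (p+1)/\tau$ (equivalently $t = k(p-1)$) to satisfy $\gcd(k,p) = 1$, which is automatic, so Lemma~\ref{lem:HighDeg-Fp2} applies verbatim. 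Everything else is the same elementary summation as in Lemma~\ref{lem:Energy F_p}.
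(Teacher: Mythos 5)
Your proposal is correct and is exactly the argument the paper intends (the paper only sketches it, saying to use Lemma~\ref{lem:HighDeg-Fp2} in place of Lemma~\ref{lem:HighDeg} and to note $\tau=(p^2-1)/t$): fix $(x,y)$, lift the line $u+v=x+y$ to the Fermat-type curve $\xi^t+\eta^t+a=0$, apply Lemma~\ref{lem:HighDeg-Fp2}, and sum over the $\tau^2$ pairs. The only cosmetic point is that each $(u,v)\in\cH^2$ lifts to \emph{exactly} $t^2$ pairs $(\xi,\eta)$ (not merely ``at most''), which is the direction needed for the displayed inequality; otherwise the bookkeeping, the treatment of $x+y=0$, and the verification that $t=k(p-1)$ with $\gcd(k,p)=1$ are all as required.
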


%%\begin{proof} \ccr{Set $e = (p-1)/\tau$ and mimic the proof 
%of~\cite[Lemma~4.5]{OS} for $X^{e} + Y^{e}  - A$?}
%%\end{proof} 

\section{Proof of Theorem~\ref{thm:medium e}}

\subsection{Small $\gcd(e,p-1)$}
\label{sec: p-1}
Let 
$$
\tau = \frac{p-1}{\gcd(e,p-1)},
$$
and let $\cH$ be the subgroup of $\F_p$ of order $\tau$. By our assumption~\eqref{eq:large e}  on $e$, we have
\begin{equation}
\label{eq: small tau} 
 \tau \ll p^{3/4}  .
\end{equation}  

We write $s = 2r$ and  denote by $\cF_r$ the set of $f \in \F_p$ which cannot be 
represented as 
$$
f = \sum_{i=1}^r \(u_i+ au_i^{-1}\), \qquad u_i \in \cH, \ i =1, \ldots, s.
$$
In particular, for the number $N_r(\cF_r)$ of the solutions to the   equation
$$
f = \sum_{i=1}^r \(u_i+ au_i^{-1}\), \qquad  f \in \cF_r, \ u_i \in \cH, \ i =1, \ldots, s, 
$$  
we have $N_r(\cF_r) = 0$. 
We see from~\eqref{eq:Dick Ident}  that each element of the complementing set $\cR_r = \F_p \setminus \cF_r$
can be represented by a sum of  $s$ values of $D_e(x,a)$, $x \in \F_p$.

On the other hand, by the orthogonality of exponential functions, we have
\begin{align*}
N_r(\cF_r) & = \sum_{u_1, \ldots, u_r \in \cH} \sum_{ f \in \cF_r}  \frac{1}{p} \sum_{\alpha \in \F_p} 
\ep\(\alpha\(\sum_{i=1}^r \(u_i+ au_i^{-1}\) - f\)\)\\
 & =  \frac{1}{p} \sum_{\alpha \in \F_p} \(\sum_{u \in \cH} \ep\(\alpha \(u+ au^{-1}\)\)\)^r  \sum_{f \in \cF_r} \ep\(-\alpha f \)\\
& = \frac{1}{p}( \#\cH)^r \#\cF_r + O\(p^{-1}\Delta\), 
\end{align*}
where 
$$
\Delta =  \sum_{\alpha \in \F_p^*}  \left| \sum_{u \in \cH} \ep\(\alpha \(u+ au^{-1}\) \)
\right|^r  \left| \sum_{f \in \cF_r} \ep\(\alpha f \)\right|.
$$
Thus, recalling that $N_r(\cF_r) = 0$, we obtain 
\begin{equation}
\label{eq:Ns Delta} 
\tau^r \#\cF_r  \ll \Delta
\end{equation}

Next, using the second  bound of Lemma~\ref{lem:Kloost}, we write 
\begin{equation}
\label{eq: DeltaGamma-1} 
\Delta \ll \(\tau^{20/27} p^{1/9}\)^{r-2} \Gamma, 
\end{equation}
where (after extending the summation to all $\alpha \in \F_p$) we can take
$$
\Gamma =  \sum_{\alpha \in \F_p}  \left| \sum_{u \in \cH} \ep\(\alpha \(u+ au^{-1}\) \)
\right|^2  \left| \sum_{f \in \cF_r} \ep\(\alpha f \)\right|.
$$

By the Cauchy inequality 
$$
\Gamma^2  \le  \sum_{\alpha \in \F_p}  \left| \sum_{u \in \cH} \ep\(\alpha \(u+ au^{-1}\) \)
\right|^4    \sum_{\alpha \in \F_p}  \left| \sum_{f \in \cF_r} \ep\(\alpha f \)\right|^2.
$$

Using the orthogonality of exponential functions again, and also Lemma~\ref{lem:Energy F_p}, 
and recalling~\eqref{eq: small tau}, we infer 
$$
\Gamma^2 \ll \(p \tau^{8/3} +\tau^4\) p \#\cF_r \ll p^2  \tau^{8/3}  \#\cF_r. 
$$
We now see from~\eqref{eq:Ns Delta}  and~\eqref{eq: DeltaGamma-1} that 
\begin{equation}
\label{eq: Main Ineq -1} 
\tau^r \#\cF_r  \ll \(\tau^{20/27} p^{1/9}\)^{r-2} p  \tau^{4/3}\( \#\cF_r\)^{1/2}, 
\end{equation}
or
\begin{align*}
 \#\cF_r  &\ll \tau^{40r/27-80/27+8/3-2r}  p^{2(r-2)/9 + 2} =
 \tau^{-14r/27-8/27} p^{2r/9+14/9} \\
 &\ll \gcd(e,p-1)^{14r/27+8/27} p^{-8r/27+34/27}. 
\end{align*}
Therefore, there is an absolute constant $C> 0$ such that for 
$$
\gcd(e,p-1) \le C p^{(8r-7)/(14r+8)}
$$
we have  $\#\cF_r < p/2$.

Thus, for any $f \in \F_p$ we see that the set  $f - \cR_r= \{f- u:~u \in \cR_r\}$ of cardinality 
 $\#\cR_r > p/2$ has a nontrivial intersection with $\cR_r$. Hence $f \in \cR_{2r}$.

%%% Similarly, defining 
%%% $$
%%%\tau = \frac{p-1}{\gcd(e,p-1)},
%%%$$
%%%using the second bound of Lemma~\ref{lem:Kloost}, instead of~\eqref{eq: Main Ineq -1} 
%%% we derive that 
%%% $$
%%% \tau^r \#\cF_r  \ll \(\tau^{20/27} p^{1/9}\)^{r-2} p  \tau^{4/3}\( \#\cF_r\)^{1/2}.
%%% $$
%%% Then after simple calculations we obtain that 
%%% $\cR_{2r} = \F_p$ provided 
%%%$$
%%%\gcd(e,p-1) \le C p^{(8r-7)/(14r+8)}. 
%%%$$
%%%

 \begin{remark} Certainly the first bound of Lemma~\ref{lem:Kloost} can also be used
 in our argument. However the bound it implies is always weaker than a combination 
 of the current bound and the bound of Theorem~\ref{thm:small e}.
 \end{remark}

\subsection{Small $\gcd(e,p+1)$}
We now let 
$$
\tau = \frac{p+1}{\gcd(e,p+1)},
$$
and let $\cH$ be the subgroup of $\cN_{p^2} $ of order $\tau$.

This time we write $s = 2r$ and  denote by $\cF_r$ the set of $f \in \F_p$ which cannot be 
represented as 
$$
f = \sum_{i=1}^r u_i+ u_i^{-1} =   \sum_{i=1}^r \Tr(u_i) , \qquad u_i \in \cH, \ i =1, \ldots, s.
$$
In particular, for the number $N_r(\cF_r)$ of the solutions to the   equation
$$
f =   \sum_{i=1}^r \Tr(u_i), \qquad  f \in \cF_r, \ u_i \in \cH, \ i =1, \ldots, s, 
$$  
we have $N_r(\cF_r) = 0$. 
We see from~\eqref{eq:Dick Ident}  that each element the complementing set $\cR_r = \F_p \setminus \cF_r$
can be represented by a sum of  $s$ values of $D_e(x,a)$, $x \in \F_p$

On the other hand, by the orthogonality of exponential functions, we have
\begin{align*}
N_r(\cF_r) & = \sum_{u_1, \ldots, u_r \in \cH} \sum_{ f \in \cF_r}
 \frac{1}{p} \sum_{\alpha \in \F_p} \(\sum_{u \in \cH} \ep\(\alpha  \Tr(u\)\)^r  \sum_{f \in \cF_r} \ep\(-\alpha f \)\\
 & = 
 \frac{1}{p} \sum_{\alpha \in \F_p} \ep\(\alpha\(\sum_{i=1}^r\Tr(u_i) - f\)\)\\
 & = \frac{1}{p}( \#\cH)^r \#\cF_r + O(p^{-1}\Delta), 
\end{align*}
where 
$$
\Delta =  \sum_{\alpha \in \F_p^*}  \left| \sum_{u \in \cH} \ep\(\alpha  \Tr(u) \)
\right|^r  \left| \sum_{f \in \cF_r} \ep\(\alpha f \)\right|.
$$
Thus, recalling that $N_r(\cF_r) = 0$, we obtain a full analogue of~\eqref{eq:Ns Delta}.

Next, using the first bound of  Lemma~\ref{lem:Gauss}, we write 
\begin{equation}
\label{eq: DeltaGamma-2} 
\Delta \ll \(\tau^{13/20} p^{1/6}\)^{r-1} \Gamma, 
\end{equation}
where (after extending the summation to all $\alpha \in \F_p$) we can take
$$
\Gamma =  \sum_{\alpha \in \F_p}  \left| \sum_{u \in \cH} \ep\(\alpha  \Tr(u)  \)
\right|  \left| \sum_{f \in \cF_r} \ep\(\alpha f \)\right|.
$$
We remark that in Section~\ref{sec: p-1},   Lemma~\ref{lem:Kloost} is  used $r-2$ times, 
while now the bounds of   Lemma~\ref{lem:Gauss} is used  $r-1$ times. This is because we are lacking an appropriate 
analogue of Lemma~\ref{lem:Energy F_p} in this case.  

Next by the Cauchy inequality 
$$
\Gamma^2  \le  \sum_{\alpha \in \F_p}  \left| \sum_{u \in \cH} \ep\(\alpha \Tr(u)  \)
\right|^2    \sum_{\alpha \in \F_p}  \left| \sum_{f \in \cF_r} \ep\(\alpha f \)\right|^2. 
$$

We observe that for elements $u.v \in \cN_{p^2}$, the equation $ \Tr(u)  = \Tr(v)$ 
implies that $u$ and $v$ have the same characteristic polynomial. 
Thus, either $u=v$ or $u = v^p = v^{-1}$ and by the orthogonality of exponential functions again, 
we derive 
$$
\Gamma^2 \ll p^2 \#\cH \#\cF_r \ll p^2  \tau  \#\cF_r. 
$$

We now see from~\eqref{eq:Ns Delta}  and~\eqref{eq: DeltaGamma-2} that 
\begin{equation}
\label{eq: Main Ineq +1} 
\tau^r \#\cF_r  \ll  \(\tau^{13/20} p^{1/6}\)^{r-1}  p  \tau \( \#\cF_r\)^{1/2}, 
\end{equation}
or
\begin{align*}
 \#\cF_r  &\ll \tau^{13r/10-13/10+2-2r}  p^{(r-1)/3 + 2} =
 \tau^{-7(r-1)/10} p^{r/3+5/3} \\
 &\ll \gcd(e,p+1)^{7(r-1)/10} p^{-(11 r-41)/30}. 
\end{align*}
Therefore, there is an absolute constant $C> 0$ such that for 
$$
\gcd(e,p+1) \le C p^{(11r-41)/(21r-21)}
$$
we have  $\#\cF_r < p/2$ and we conclude the proof as before. 
 
 Similarly, using the second bound of Lemma~\ref{lem:Gauss}, instead of~\eqref{eq: Main Ineq +1} 
 we derive that 
 $$
 \tau^r \#\cF_r  \ll \(\tau^{34/45}   p^{1/9} \)^{r-1} p  \tau \( \#\cF_r\)^{1/2}.
 $$
 Then after simple calculations we obtain that 
 $\cR_{2r} = \F_p$ provided 
$$
\gcd(e,p+1) \le C p^{(12r-57)/(22r-22)}. 
$$

 \section{Proof of Theorem~\ref{thm:small e}}

%%\subsection{Small $\gcd(e,p-1)$}
We proceed as in the proof of Theorem~\ref{thm:medium e}.
Indeed, an application of the bound~\eqref{eq:K-Weil},  instead of~\eqref{eq: Main Ineq -1}  leads us to the inequality
$$
\tau^r \#\cF_r  \ll \(p^{1/2}\)^{r-2} p  \tau^{4/3}\( \#\cF_r\)^{1/2}. 
$$
This implies that 
 $\cR_{2r} = \F_p$ provided 
$$
\gcd(e,p-1)  \le C p^{(3r-5)/(6r-8)} 
$$
and we obtain the desired result with $s = 2r$. 

%%\subsection{Small $\gcd(e,p+1)$}
%%In this case, Lemma~\ref{lem:G-Weil},  instead of~\eqref{eq: Main Ineq +1}, yields
%%$$
%%\tau^r \#\cF_r  \ll \(p^{1/2}\)^{r-1} p  \tau \( \#\cF_r\)^{1/2}
%%$$
%%This implies that 
%% $\cR_{2r} = \F_p$ provided 
%%$$
%%\gcd(e,p-1)  \le C p^{(3r-5)/(6r-8)} 
%%$$

\section{Proof of Theorem~\ref{thm:monom med k}}

Clearly, without loss of generality we can assume that $k \mid p+1$. 
Clearly the set of powers $x^k$, $x \in \cN_{p^2}$ forms a subgroup $\cH$
of  $\cN_{p^2}$ of order 
$$
\tau = (p+1)/k.
$$
Note that since $k \ge p^{1/6}$ we get $\tau \ll p^{5/6}$. In this case the bounds of 
 Lemma~\ref{lem:Energy F_p2} takes forme 
$$
T_\tau \ll \tau^{14/5}.
$$

Similarly to the proof of Theorem~\ref{thm:medium e}, we write $s = 2r$ and  denote by $\cG_r$ the set of $f \in \F_{p^2}$ which can not be 
represented as 
$$
f = \sum_{i=1}^r u_i, \qquad u_i \in \cH, \ i =1, \ldots, s.
$$

The previous argument used with Lemmas~\ref{lem:Gauss} and~\ref{lem:Energy F_p2}
gives the following analogues of~\eqref{eq: Main Ineq -1} 
$$
\tau^r \#\cG_r  \ll      \begin{cases} 
\( \tau^{34/45}   p^{1/9}\)^{r-2} p^2 \tau^{14/5} \( \#\cG_r\)^{1/2},\\
 \( p^{1/2}\)^{r-2} p^2 \tau^{14/5}   \( \#\cG_r\)^{1/2}.
 \end{cases}
$$
We note that here we do not use the first bound of Lemma~\ref{lem:Gauss} 
as it does not seems to give anything better than a combination of the 
other two. 

These inequalities imply that for an appropriate absolute constant $C$
if one of the conditions 
$$
k \le C  \begin{cases} 
 p^{(6r-93)/(11r-58)} , \\
 p^{(5r-28)/(10r-28)}, 
 \end{cases}
$$
is satisfied, then $\# \cG_r < p^2/2$ and the previous argument concludes the proof with $s = 2r$.

%
%The previous argument used with Lemmas~\ref{lem:Gauss} and~\ref{lem:Energy F_p2}
%gives the following analogues of~\eqref{eq: Main Ineq -1} 
%$$
%\tau^r \#\cG_r  \ll \begin{cases} \(\tau^{13/20} p^{1/6}\)^{r-2} p^2 \(\tau^{14/5} + \tau^4/p\) \( \#\cG_r\)^{1/2}, \\
%\( \tau^{34/45}   p^{1/9}\)^{r-2} p^2 \(\tau^{14/5} + \tau^4/p\) \( \#\cG_r\)^{1/2},\\
% \( p^{1/2}\)^{r-2} p^2 \(\tau^{14/5} + \tau^4/p\) \( \#\cG_r\)^{1/2}.
% \end{cases}
%$$
%These inequalities imply that for an appropriate absolute constant $C$
%if one of the conditions 
%$$
%k \le C  \begin{cases}  \min\left\{p^{(11r -130)/(21r-90)}, p^{(11r -142)/(21r-162)}\right \}, \\
%  \min\left\{p^{(6r-93)/(11r-58)}, p^{(6r-102)/(11r-112)}\right \}, \\
%  \min\left\{p^{(5r-28)/(10r-28)}, p^{(r-6)/(2r-8)}\right \}, \
% \end{cases}
%$$
%is satisfied, then $\# \cG_r < p^2/2$ and the previous argument concludes the proof with $s = 2r$. 

\section*{Acknowledgement} 

The authors are grateful to Arne Winterhof for very helpful comments and 
suggestions.

During the preparation of this work 
I.E.S. was  partially supported by ARC Grants
DP230100530 and DP230100534, 
J.F.V.  by the Ministry for Business, Innvovation and Employment and by the Marsden Fund, administered by the Royal Society of New Zealand.

\end{document}